\numberwithin{equation}{section}
\definecolor{mycitecolor}{rgb}{1,0,0}
\definecolor{mylinkcolor}{rgb}{0.66,0,0} 
\definecolor{myurlcolor}{rgb}{0.33,0,0}
\definecolor{labelkey}{rgb}{0,0,1} 
\definecolor{refkey}{rgb}{0,0,0.5} 
\theoremstyle{plain}
\newtheorem{theorem}{Theorem}[section]
\newtheorem{corollary}[theorem]{Corollary}
\newtheorem{lemma}[theorem]{Lemma}
\newtheorem{proposition}[theorem]{Proposition}
\theoremstyle{definition}
\newtheorem{remark}[theorem]{Remark}
\newtheorem{example}[theorem]{Example}
\numberwithin{equation}{section}
\def\ldiv{\backslash}
\def\rdiv{/}
\def\comm#1{\llbracket#1\rrbracket} 
\def\assoc#1{\llbracket#1\rrbracket} 
\def\nuc#1{\mathrm{Nuc}(#1)}
\def\aut#1{{\mathrm{Aut}(#1)}}
\def\mlt#1{\mathrm{Mlt}(#1)}
\def\inn#1{\mathrm{Inn}(#1)}
\def\supp{\mathrm{supp}}
\def\lesupp{\preceq}
\def\lsupp{\prec}
\def\lenorm{\le_{\mathrm{norm}}}
\def\lelex{\le_{\mathrm{lex}}}
\def\weight{\omega}
\title{Supernilpotent groups and $3$-supernilpotent loops}
\author{David Stanovsk\'y}
\author{Petr Vojt\v echovsk\'y}
\address{Department of Algebra, Faculty of Mathematics and Physics, Charles University, Soko\-lovsk\'a 83, 18675 Prague, Czech Republic}
\email{stanovsk@karlin.mff.cuni.cz}
\address{Department of Mathematics, University of Denver, 2390 S. York St., Denver, Colorado 80208, U.S.A.}
\email{petr.vojtechovsky@du.edu}
\thanks{The paper is written within the framework of the cooperation grant LTAUSA19070. P. Vojt\v echovsk\'y partially supported by the Simons Foundation Mathematics and Physical Sciences Collaboration Grant for Mathematicians no.~855097 and by the PROF grant of the University of Denver.}
\keywords{Supernilpotence, nilpotent loop, supernilpotent loop, multilinear associator.}
\subjclass{20N05, 20F18}
\date{\today}
\begin{document}

\begin{abstract}
We find a short equational basis for the variety of $3$-supernilpotent loops. We also present a conceptually simple proof that $k$-nilpotence and $k$-supernilpotence are equivalent for groups. Connections between $3$-supernilpotent loops, Moufang loops, code loops, automorphic loops and AIM loops are explored.
\end{abstract}

\maketitle

\section{Introduction}

The classical approach to nilpotence of algebraic structures (shortly, algebras) is recursive: define the center, then a central series, and let the class of nilpotence be the length of a shortest central series.

In groups, the center consists of all elements that commute with everything. In loops, the center consists of all elements that commute \emph{and associate} with everything \cite{Br-book}.

In 1970s, universal algebraists found a suitable syntactic condition to define the center of any algebra \cite{FM}. However, many characteristic properties of nilpotent groups do not carry over to this more general setting. Perhaps most important among such properties is the fact that there are finite nilpotent algebras, loops in particular, which do not admit a direct decomposition into $p$-primary components.

This issue was addressed recently in a novel way \cite{AM} that is based on another fundamental property: the limited essential arity of absorbing polynomial operations. An algebra is called \emph{$k$-supernilpotent} if all absorbing polynomials of arity bigger than $k$ are constant. See Section \ref{Sc:Absorbing} for a more detailed definition.

For groups, $k$-supernilpotence coincides with $k$-nilpotence \cite{AE}. Under mild universal algebraic assumptions (which cover groups and loops), supernilpotence implies nilpotence \cite{AM,Moo} and a finite algebra is supernilpotent if and only if it is a direct product of nilpotent algebras of prime power order \cite[Lemma 7.6]{AM}. Nilpotence does not imply supernilpotence in loops (Example \ref{Ex:6}).

The study of supernilpotence in loops was initiated in \cite{SS}. In the present paper, we address the problem of equational axiomatization of $k$-supernilpotent loops.

If $\mathcal V$ is a variety with a Mal'tsev term, then $k$-supernilpotent algebras from $\mathcal V$ form a subvariety $\mathcal V_k$ \cite[Theorem 4.2]{AMO}. An explicit set of identities axiomatizing $\mathcal V_k$ relative to an axiomatization of $\mathcal V$ is known \cite{AMO}, but it is rather complicated, infinite, and unnatural in the context of groups and loops. Given a particular variety $\mathcal V$, it is not clear from the general result of \cite{AMO} whether a finite basis of $\mathcal V_k$ exists. In groups, the question is of course settled by the equivalence of $k$-supernilpotence and $k$-nilpotence. In the present paper, we aim at a finite equational basis of $k$-supernilpotent loops based on elementwise commutators and associators, in the spirit of the commutator identity $[[[x_1,x_2],\ldots],x_{k+1}]=1$ that axiomatizes $k$-(super)nilpotent groups. We succeed for $k\leq 3$ (Theorems \ref{Pr:SupernilpotentLoops12} and \ref{Th:MainLoops}).

The paper has two essential parts. In Section \ref{sec:groups} we present a conceptually simple proof of the equivalence of $k$-nilpotence and $k$-supernilpotence for groups (Theorem \ref{Th:MainGroups}). Then we focus on loops. In Section \ref{sec:loops_lemmas} we introduce commutators and associators in loops and establish several technical properties. Section \ref{sec:sn in loops} contains our main result, a small equational basis for $3$-supernilpotent loops (Theorem \ref{Th:MainLoops}). In Section \ref{Sc:Consequences} we explore consequences of and connections to the main result. In Subsection \ref{Ss:Inner} we prove that any two standard generators of inner mapping groups of $3$-supernilpotent loops commute, with the exception of two conjugations. We also point out connections to automorphic loops and AIM loops. In Subsection \ref{Ss:Moufang} we show that $2$-nilpotent Moufang loops (which include code loops) are $3$-supernilpotent. In Subsection \ref{Ss:ZN} we show how the class of supernilpotence is reduced modulo the center and modulo the nucleus (if the nucleus is normal) in $k$-supernilpotent loops. We also prove that the nucleus is always normal in $3$-supernilpotent loops. Finally, in Subsection \ref{Ss:Comp} we computationally identify all supernilpotent loops among very small nilpotent loops.

\section{Absorbing polynomials and supernilpotence}\label{Sc:Absorbing}

Let us fix a language $L$ containing a binary symbol $\cdot$ and a constant $1$. Throughout the paper, we have in mind two particular examples: groups in the language $\{\cdot,^{-1},1\}$ and loops in the language $\{\cdot,\ldiv,\rdiv,1\}$. While we often use additional symbols in terms and polynomials, such as commutators $[\_,\_]$, associators $[\_,\_,\_]$ and one-sided inverses $^\lambda,\,^\rho$ in loops, introducing these term definable symbols into the language does not change the class of (super)nilpotence.

Let $A$ be an algebra in the language $L$. By a \emph{polynomial} on $A$ we mean a term in the language $L\cup A$, where elements of $A$ are treated as (new) constants. Two polynomials on $A$ are called \emph{equivalent} if they induce the same functions on $A$.

The \emph{support} of a polynomial $p$ is the set $\supp(p) = \{x: x\text{ is a variable that occurs in }p\}$. Note that the support of $p$ is independent of the constants contained in $p$.

Let $e_1,\dots,e_n,e\in A$. An $n$-ary polynomial $p$ on $A$ is called \emph{absorbing at $(e_1,\dots,e_n)$ into $e$}, if $p(a_1,\dots,a_n)=e$ whenever $a_1,\dots,a_n\in A$ are such that $a_i=e_i$ for at least one $i$. Following \cite{AM}, an algebra $A$ is called \emph{$k$-supernilpotent} if all absorbing polynomials (at any tuple and into any element) in at least $k+1$ variables are constant.

There are several alternative definitions of $k$-supernilpotence, cf.~\cite{AM,Bu,Op}, however, we find the above definition to be most convenient for our purposes.

The following fact is our main tool to establish $k$-supernilpotence.

\begin{proposition}\label{Pr:MainTechnical}
Let $(A,\cdot,1,\ldots)$ be an algebra such that $1$ is the identity element with respect to the binary operation $\cdot$. Let $p$ be an $n$-ary polynomial on $A$ with $\supp(p)=\{x_1,\dots,x_n\}$ that is absorbing at $(e_1,\dots,e_n)$ into $1$. Assume that $p$ is equivalent to $\prod_{S\in\mathcal S}p_S$, where the product (in some order and in some parenthesizing) ranges over a subset $\mathcal S$ of proper subsets of $\{1,\dots,n\}$, and where every $p_S$ is a polynomial with $\supp(p_S)=\{x_i: i\in S\}$ that is absorbing at $(e_i:i\in S)$ into $1$. Then $p$ is constant.
\end{proposition}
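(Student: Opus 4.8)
The plan is to prove this by induction on the number of variables $n$, or more precisely on the maximum size of a set $S\in\mathcal S$. The base case is essentially trivial: if every $S\in\mathcal S$ is empty, then every $p_S$ is a constant polynomial which, being absorbing into $1$ with empty support, must equal $1$ (the absorbing condition at the empty tuple into $1$ is vacuous, but with no variables $p_S$ is just a constant, and we can evaluate; actually one should be slightly careful and note that a polynomial with empty support that is ``absorbing'' into $1$ is exactly the constant $1$). Hence $p$ is equivalent to a product of copies of $1$, which is $1$, and in particular constant. Alternatively, the true base case worth isolating is $n=1$: a unary absorbing polynomial $p$ at $e_1$ into $1$, written as a product of $p_S$ with $S$ a proper subset of $\{1\}$, i.e. $S=\emptyset$, so each factor is the constant $1$ and again $p\equiv 1$.

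For the inductive step, the idea is to use the absorbing condition of $p$ itself to strip off variables one at a time and invoke induction. Fix an index, say $i=n$, and consider substituting $x_n=e_n$. Since $p$ is absorbing at $(e_1,\dots,e_n)$ into $1$, the resulting polynomial $p(x_1,\dots,x_{n-1},e_n)$ is identically $1$. On the other hand, substituting $x_n=e_n$ into the product $\prod_{S\in\mathcal S}p_S$ kills (sends to $1$) every factor $p_S$ with $n\in S$, because such a $p_S$ is absorbing at the tuple that includes $e_n$ in the $n$-th coordinate. So $1 = \prod_{S\in\mathcal S,\,n\notin S}p_S$ as polynomials in $x_1,\dots,x_{n-1}$. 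This is a nontrivial relation among the factors not involving $x_n$. I would like to use it to conclude that $p$ is equivalent to a product of just the factors $p_S$ with $n\in S$, each of which has strictly fewer than $n$ variables, and then apply the induction hypothesis — but the catch is that the factors are interleaved in a fixed parenthesized product, so one cannot simply cancel the $n\notin S$ factors. The honest approach is to work modulo the subgroup/subloop-theoretic structure: since $1$ is a two-sided identity, evaluating the product at $x_n=e_n$ gives a word in the $p_S$'s ($n\notin S$) that is identically $1$; this lets me rewrite, inside the original product, the block of $n\notin S$ factors (which, because the $n\in S$ factors are ``transparent'' when $x_n=e_n$) and peel them away. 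Concretely I expect to argue that for \emph{each fixed value} of $x_n$, the map $(x_1,\dots,x_{n-1})\mapsto p(x_1,\dots,x_n)$ is a polynomial that, combined with the $x_n=e_n$ relation, is absorbing in the remaining variables — and then reassemble.

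A cleaner way to run the induction, which I would actually pursue, is the following. Let $m=\max_{S\in\mathcal S}|S|$ and induct on $m$ (with $n$ fixed). If $m<n$, fine; the claim is that $p$ must in fact be a product whose factors all have support of size $<n$, hence $<n$ variables each, and one shows directly that an absorbing polynomial in $n$ variables which is equivalent to a product of polynomials each depending on $<n$ of the variables is constant — this is exactly the content we want, and it reduces to the following sub-claim proved by induction on $n$: \emph{if an $n$-ary polynomial $p$ absorbing into $1$ is a product of polynomials each missing at least one of the variables $x_1,\dots,x_n$, then $p\equiv 1$.} For the sub-claim, pick a variable $x_j$ that is missed by the ``most'' factors, push $x_j\to e_j$ to learn that the sub-product over factors containing $x_j$ is forced (given the values of the other variables) to invert the sub-product over factors missing $x_j$; then because the identity element is two-sided one massages the parenthesization so that the $x_j$-free part becomes a single absorbing polynomial in the $n-1$ remaining variables of smaller complexity, apply induction, and finish. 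I expect \textbf{the main obstacle} to be precisely this bookkeeping with the fixed parenthesization and order of the product: making rigorous the step ``substitute $x_j=e_j$, observe the surviving factors multiply to $1$, therefore recombine'' requires carefully tracking which factors become the identity and using associativity/the two-sided identity law to shuffle the product — and in the loop setting one must be scrupulous since $\cdot$ is not associative, so the ``product in some order and parenthesizing'' must be handled with the identity-element laws ($1\cdot a=a=a\cdot 1$) only. This is routine but fiddly, and it is the heart of the argument.
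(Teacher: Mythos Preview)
Your substitution idea is sound up to a point: setting $x_j=e_j$ kills every factor $p_S$ with $j\in S$ and, since $p$ itself absorbs, shows that the product of the surviving factors (those with $j\notin S$), in the parenthesization inherited by deleting the $1$'s, is identically $1$. But the step you flag as ``routine but fiddly'' is a genuine obstruction, not bookkeeping. In the original product the factors missing $x_j$ are interleaved with the factors containing $x_j$ in an arbitrary non-associative bracketed word. Knowing that a \emph{non-contiguous} collection of factors multiplies (after collapsing) to $1$ does not let you cancel it from the full product: the identity law $1\cdot a=a=a\cdot 1$ only helps when an actual factor at an actual leaf equals $1$, not when some scattered sub-product does. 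There is therefore no legitimate ``massaging of the parenthesization,'' and your induction stalls here. (Your later phrasing about the factors containing $x_j$ being ``forced to invert'' the others does not help either: at $x_j=e_j$ those factors are simply $1$, so there is nothing to invert, and for general $x_j$ you have no relation at all.)

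The paper sidesteps this by working pointwise rather than with polynomial identities. Suppose $p(a_1,\dots,a_n)\ne 1$. Then some factor satisfies $p_S(a_i:i\in S)\ne 1$; choose $S$ minimal with respect to any fixed linear extension of inclusion. Evaluate $p$ at the tuple $b$ with $b_i=a_i$ for $i\in S$ and $b_i=e_i$ for $i\notin S$. At $b$, \emph{every} factor $p_T$ with $T\ne S$ is literally $1$: if $T\not\subseteq S$ then $p_T$ absorbs at some $e_i$; if $T\subsetneq S$ then $p_T(b_i:i\in T)=p_T(a_i:i\in T)=1$ by minimality of $S$. Hence all but one factor equals $1$ at $b$, and now the identity law alone collapses $p(b)$ to $p_S(a_i:i\in S)\ne 1$, contradicting absorption of $p$ (since $S$ is proper, some $b_i=e_i$). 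The missing idea in your attempt is exactly this minimality trick: by choosing the evaluation point cleverly, one arranges for every factor except a single one to vanish, which is precisely the situation the identity laws can handle without any rearrangement.
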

\begin{proof}
The subsets of $\{1,\dots,n\}$ can be linearly ordered by extending the inclusion partial order. Let us denote by $\lesupp$ any such linear order.

For a contradiction, let $a_1,\dots,a_n\in A$ be such that $p(a_1,\dots,a_n)\ne 1$. Since $p$ is equivalent to  $\prod_{S\in\mathcal S}p_S$, there is a proper subset $S$ of $\{1,\dots,n\}$ such that $p_S(a_i:i\in S)\ne 1$ and $S$ is the least subset with this property with respect to the linear order $\lesupp$. Let us investigate $p(b_1,\dots,b_n)$, where $b_i=a_i$ if $i\in S$ and $b_i=e_i$ if $i\not\in S$.

Consider a proper subset $T\in\mathcal S$ of $\{1,\dots,n\}$. If $T\setminus S\ne\emptyset$ then $p_T(b_i:i\in T)=1$ because $p_T$ is absorbing at $(e_i:i\in T)$ and there is $i\in T$ with $b_i=e_i$. In all other situations we have $T\subseteq S$. If $T$ is a proper subset of $S$ then $T\lsupp S$ (since $\lesupp$ extends inclusion) and therefore $p_T(b_i:i\in T)=p_T(a_i:i\in T)=1$ by minimality of $S$.

It follows that $p(b_1,\dots,b_n)=p_S(b_i:i\in S)$. Now,  $p$ is absorbing at $(e_1,\dots,e_n)$, and $b_i=e_i$ for some $i$ because $S$ is a proper subset of $\{1,\dots,n\}$. Therefore $1=p(b_1,\dots,b_n)=p_S(b_i:i\in S)=p_S(a_i:i\in S)\ne 1$, a contradiction.
\end{proof}

It is easy to see that in loops we can without loss of generality consider only absorption at $(1,\dots,1)$ into $1$, cf.~\cite{SS}. In particular, a loop is $k$-supernilpotent if and only if all absorbing polynomials at $(1,\dots,1)$ into $1$ in at least $k+1$ variables are constant. Therefore, for the balance of the paper, an \emph{absorbing polynomial} will always mean an absorbing polynomial at $(1,\dots,1)$ into $1$.

\section{Supernilpotence in groups}\label{sec:groups}

Here we present a conceptually simple proof of $k$-nilpotence and $k$-supernilpotence for groups. We start with a classical result of Phillip Hall, cf. \cite[Theorem 2.53 and top of page 54]{Hall}. Denote by $[x,y]$ the standard commutator term $x^{-1}y^{-1}xy$ in groups.

Let $Y$ be a set of variables, their inverses and constants. Then \emph{complex commutator polynomials} and their \emph{weights} with respect to $Y$ are defined as follows: every $y\in Y$ is a complex commutator polynomial of weight $1$, and if $t_1$, $t_2$ are complex commutator polynomials of weights $k_1$ and $k_2$, respectively, then $[t_1,t_2]$ is a complex commutator polynomial of weight $k_1+k_2$. The weight of a complex commutator polynomial $t$ with respect to $Y$ will be denoted by $\weight_Y(t)$ or just by $\weight(t)$.

\emph{Simple commutator polynomials} form a subset of complex commutator polynomials, defined inductively as follows: every $y\in Y$ is a simple commutator polynomial (of weight $1$), and if $t$ is a simple commutator polynomial of weight $k$ and $y\in Y$ then $[t,y]$ is a simple commutator polynomial of weight $k+1$.

\emph{Complex commutators} (resp. \emph{simple commutators}) of weight $k$ in a group $G$ are then obtained by evaluating complex commutator polynomials (resp. simple commutator polynomials) of weight $k$ on $G$.

The usual definition of $k$-nilpotence for groups can be restated as follows: A group $G$ is \emph{$k$-nilpotent} if and only if all simple commutators of weight $k+1$ in $G$ are trivial.

\begin{theorem}[P.~Hall]\label{Th:Hall}
Let $G$ be a group and $k$ a positive integer. Then all simple commutators of weight $k$ on $G$ are trivial if and only if all complex commutators of weight $k$ on $G$ are trivial.
\end{theorem}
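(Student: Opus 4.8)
The plan is to prove the nontrivial direction: assuming all simple commutators of weight $k$ vanish on $G$, show that every complex commutator of weight $k$ also vanishes. The reverse implication is immediate since simple commutator polynomials are a subset of the complex ones. The natural strategy is induction on the structure of a complex commutator polynomial $t$ with $\weight(t)=k$, together with a secondary induction to reduce complex commutators to simple ones. The key algebraic input is the family of standard commutator identities valid in every group, namely the Hall--Witt type expansions
\begin{equation*}
[xy,z]=[x,z]^y[y,z],\qquad [x,yz]=[x,z][x,y]^z,
\end{equation*}
and the Jacobi/Hall--Witt identity relating $[[x,y],z]$, $[[y,z],x]$, $[[z,x],y]$. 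I would combine these with the observation that conjugation $u\mapsto u^g=g^{-1}ug$ sends a complex commutator of weight $m$ to a product of complex commutators of weight $\ge m$ (expand $g$ into its commutator pieces and iterate the identities above), so that working modulo the ``weight $>k$'' part is consistent.

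First I would set up the induction hypothesis carefully: for all positive integers $m$, if all simple commutators of weight $m$ on $G$ are trivial, then all complex commutators of weight $m$ on $G$ are trivial. One can do a downward argument: since $G$ has simple commutators of weight $k$ trivial, by a standard fact it also has simple commutators of every weight $>k$ trivial (appending entries), so we may work inside the quotient-free setting where everything of weight $>k$ already dies. Then, for a complex commutator polynomial $t=[t_1,t_2]$ with $\weight(t_1)=k_1$, $\weight(t_2)=k_2$, $k_1+k_2=k$, I would induct on $\min(k_1,k_2)$. If $k_2=1$ then $t_2\in Y$ and the content is showing $[t_1,y]$ is trivial for $t_1$ complex of weight $k-1$; here I would write $t_1$ itself as $[t_1',t_1'']$ and use the Hall--Witt identity together with the commutator expansion identities to rewrite $[[t_1',t_1''],y]$ as a product of commutators each of which is ``more simple'' (has a longer tail of single-variable entries) of total weight $\ge k$, modulo higher weight. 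Iterating, $[t_1,y]$ becomes a product of genuine simple commutators of weight $\ge k$, all trivial by hypothesis.

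If instead $k_2\ge 2$, write $t_2=[t_2',t_2'']$ and apply $[t_1,[t_2',t_2'']]$. Using the Hall--Witt identity (the three-term cyclic relation) applied to the triple $(t_1,t_2',t_2'')$, one expresses $[t_1,[t_2',t_2'']]$ in terms of $[t_2',[t_2'',t_1]]$ and $[t_2'',[t_1,t_2']]$, conjugated by single entries. Each of these has its ``deepest'' bracket rearranged so that the second argument has strictly smaller weight than $k_2$, so the inner induction on $\min$ of the two weights applies after one more use of the expansion identities to handle the conjugations (which only add higher-weight pieces). Thus every complex commutator of weight $k$ is congruent, modulo products of commutators of weight $>k$, to a product of simple commutators of weight $k$; since all of these vanish, so does $t$.

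The main obstacle I anticipate is bookkeeping rather than conceptual: one must be scrupulous that every application of $[xy,z]=[x,z]^y[y,z]$ and the Hall--Witt identity, and every conjugation that appears, produces only complex commutators of weight $\ge k$, never of smaller weight, so that the induction is genuinely decreasing in the right parameter and the ``error terms'' are all killed by the hypothesis (using that weight-$k$ triviality of simple commutators forces weight-$(k+1)$ triviality as well). Making the induction parameter precise — I would use lexicographic order on $(\text{weight of }t_2,\ \text{number of non-singleton brackets in }t_2)$ or equivalently a measure of how far $t$ is from being simple — and checking that each identity strictly decreases it while only introducing terms of weight $\ge k$, is the delicate part.
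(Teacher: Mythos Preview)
The paper does not prove this statement: Theorem~\ref{Th:Hall} is quoted as a classical result of P.~Hall with a reference to \cite[Theorem 2.53 and top of page 54]{Hall}, and no proof is given in the paper itself. So there is no ``paper's own proof'' to compare against.

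That said, your outline is the standard route to Hall's theorem and is correct in spirit. The clean way to organize what you wrote is via the lower central series: one shows by induction (using exactly the identities you list, or equivalently the Three Subgroups Lemma) that $[\gamma_i(G),\gamma_j(G)]\subseteq\gamma_{i+j}(G)$, from which it follows that every complex commutator of weight $k$ lies in $\gamma_k(G)$. Since $\gamma_k(G)$ is generated by simple commutators of weight $k$, the vanishing of all simple commutators of weight $k$ forces $\gamma_k(G)=1$ and hence kills every complex commutator of weight $k$. Your direct rewriting argument is essentially an unpacking of this, and your worry about bookkeeping is exactly the content of verifying $[\gamma_i,\gamma_j]\subseteq\gamma_{i+j}$; packaging it that way makes the induction parameter (namely $j$, with the Three Subgroups Lemma handling the inductive step) cleaner than the ad hoc lexicographic measure you propose.
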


For a fixed set $X$ of variables and a fixed set $C$ of constants, we define the \emph{norm} of a complex commutator polynomial $t$ as
\begin{displaymath}
    N(t)=(\supp(t),\weight_{X^{\pm 1}\cup C}(t)).
\end{displaymath}
For instance, let $X=\{x_1,x_2\}$, $C=\{c\}$ and $t=[[x_1,x_2],[x_1^{-1},c]]$. Then $\supp(t)=\{x_1,x_2\}$, $\weight_{X^{\pm 1}\cup C}(t) = 4$ and therefore $N(t)=(\{x_1,x_2\},4)$. Let us emphasize that constants contribute to the weight $\weight_{X^{\pm 1}\cup C}(t)$ but not to $\supp(t)$.

We order norms lexicographically as follows. In the first component we employ any linear order that extends the inclusion partial order on the power set of $X$. In the second component we employ the usual linear order of natural numbers. The resulting lexicographic (linear) order on the norms will be denoted by $\lenorm$.

Our next goal is to rewrite products of complex commutator polynomials up to equivalence so that the norms of individual factors are nondecreasing. For a product of complex commutator polynomials $p=t_1\cdots t_\ell$ and for two norms $a>_{\mathrm{norm}} b$ we let
\begin{displaymath}
    d(p)_{a,b} = |\{(t_i,t_j):i<j,\,N(t_i)=a\text{ and } N(t_j)=b\}|.
\end{displaymath}
The \emph{disorder} of $p$ is then the sequence
\begin{displaymath}
    d(p) = (d(p)_{a,b}:a,b \text{ are norms satisfying }a>_{\mathrm{norm}}b ),
\end{displaymath}
where the norm pairs $(a,b)$ are ordered lexicographically by $\lenorm$ in an increasing fashion. Hence $(t_i,t_j)$ contributes to the disorder of $p$ if $t_i$ appears to the left of $t_j$ but is strictly larger in norm than $t_j$. We wish to achieve $d(p)=(0,0,\dots)$.

If the set of all possible norm pairs $(a,b)$ with $a>_{\mathrm{norm}} b$ is finite, say of size $m$, the disorder $d(p)$ can be identified with an element of $(\mathbb N^m,\lelex)$, a set lexicographically ordered by $\lelex$ with respect to the usual linear order on $\mathbb N$.

\begin{lemma}\label{Lm:Reorder}
Let $G$ be a $k$-nilpotent group and $p$ a polynomial on $G$ with support $X$. Then $p$ is equivalent to a product of complex commutator polynomials with respect to $X^{\pm 1}\cup G$ in which the norms of the factors are nondecreasing and the weight of every factor is at most $k$.
\end{lemma}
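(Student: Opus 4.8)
The plan is to proceed in two stages: first express $p$ as *some* product of complex commutator polynomials of weight at most $k$, then apply a rewriting procedure that monotonically decreases the disorder until the factors are sorted by norm.

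\medskip

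\emph{Stage 1: writing $p$ as a product of bounded-weight complex commutators.}
I would start from the elementary identity $uv = u\,v\,[u,v]^{-1}\cdot[u,v]$... more usefully, the standard collection process: any group polynomial $p$ with support $X$ can be written as a word in the variables of $X$, their inverses, and constants from $G$. Using the identity $yx = xy[y,x]$ repeatedly (valid in any group), one collects all occurrences of a chosen generator to one side, at the cost of introducing commutators; iterating the collection process over all letters expresses $p$ as a product of simple commutator polynomials in the letters $X^{\pm1}\cup G$ of nondecreasing weight, where the leftover ``uncollected'' part has weight $1$. The point is purely formal (an identity of free-group words), so it holds in $G$ in particular. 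Since $G$ is $k$-nilpotent, every simple commutator polynomial of weight $\ge k+1$ evaluates to $1$ on $G$ by the definition of $k$-nilpotence recalled just before Theorem~\ref{Th:Hall}, so those factors may be deleted; the remaining factors all have weight at most $k$. (Here it is essential that constants count toward the weight, so a weight-$(k+1)$ commutator involving constants also dies --- this is exactly where $k$-nilpotence, not merely the triviality of commutators in the $x_i$ alone, is used.) This produces $p$ equivalent to a product $t_1\cdots t_\ell$ of complex commutator polynomials with $\weight(t_i)\le k$ for all $i$.

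\medskip

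\emph{Stage 2: sorting by norm via a monotone rewriting.}
Now I would run an exchange/bubble-sort argument on $t_1\cdots t_\ell$ using the norm order $\lenorm$ and the disorder statistic $d(p)$. If the factors are not already norm-nondecreasing, there is an adjacent pair $t_i t_{i+1}$ with $N(t_i) \genorm N(t_{i+1})$, in fact with $N(t_i) >_{\mathrm{norm}} N(t_{i+1})$ (equal norms contribute nothing to the disorder, so we may assume strict). Rewrite $t_i t_{i+1} = t_{i+1} t_i [t_i, t_{i+1}]$. The new commutator $[t_i,t_{i+1}]$ is again a complex commutator polynomial, of weight $\weight(t_i)+\weight(t_{i+1})$; if this exceeds $k$ it evaluates to $1$ on $G$ and is dropped, and if it is at most $k$ its support satisfies $\supp([t_i,t_{i+1}]) = \supp(t_i)\cup\supp(t_{i+1}) \supseteq \supp(t_{i+1})$, with strict inclusion or equal support but strictly larger weight --- either way $N([t_i,t_{i+1}]) >_{\mathrm{norm}} N(t_{i+1})$ and also $N([t_i,t_{i+1}]) >_{\mathrm{norm}} N(t_i)$ is false in general, so one must check the effect on $d$ carefully. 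The key accounting: swapping $t_i$ and $t_{i+1}$ removes one inversion at the norm pair $(N(t_i),N(t_{i+1}))$, and the inserted factor $[t_i,t_{i+1}]$, being strictly larger in norm than $t_{i+1}$ (and than $t_i$? --- no: its support contains both supports, so its first coordinate is $\ge$ both, hence its norm is $\ge$ both, so the inserted factor is never \emph{below} either neighbor in norm). Consequently every inversion the inserted factor creates involves a strictly larger norm pair $(a,b)$ with $a$ strictly larger than $N(t_i)$ in the first coordinate (since $\supp([t_i,t_{i+1}])\supsetneq\supp(t_i)$ when $\supp(t_{i+1})\not\subseteq\supp(t_i)$, and $=$ with larger weight otherwise). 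So $d(p)$ strictly decreases in the lexicographic order $\lelex$ on the relevant pair index. Since the set of norm pairs that can occur is finite (the supports range over subsets of the finite set $X$ and weights are bounded by $k$), $d(p)$ lives in a well-ordered set and the process terminates, yielding a norm-nondecreasing product whose factors still all have weight $\le k$.

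\medskip

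\emph{Main obstacle.} The delicate point is the disorder bookkeeping in Stage~2: one must verify that the inserted commutator $[t_i,t_{i+1}]$ never lands ``below'' its neighbors in norm, so that any new inversions it creates occur only at norm pairs that are lexicographically later than $(N(t_i),N(t_{i+1}))$ --- guaranteeing a net lexicographic decrease of $d(p)$ even though the \emph{length} of the product and the \emph{number} of inversions may both go up. This rests on the monotonicity $\supp([t_i,t_{i+1}]) = \supp(t_i)\cup\supp(t_{i+1})$ together with the fact that the first coordinate of the norm order extends inclusion, plus a separate argument in the equal-support case using that the weight strictly increases. The rest (Stage~1 collection, and deleting overweight factors using $k$-nilpotence) is standard.
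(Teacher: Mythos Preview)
Your approach is essentially the paper's, and your Stage~2 is exactly the argument it runs. A few points where the paper is cleaner or where your write-up wobbles:

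\textbf{Stage~1 is unnecessary.} The paper does not preprocess via a collection process. It simply observes that $p$ is (up to equivalence) a product of constants, variables, and their inverses --- i.e.\ a product of weight-$1$ complex commutator polynomials --- and immediately starts the swapping procedure on that. Your Stage~1 is correct but adds nothing.

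\textbf{Hall's Theorem is needed, and you skip it.} In Stage~2 you write that if $\weight([t_i,t_{i+1}])>k$ then ``it evaluates to $1$ on $G$ and is dropped.'' But $[t_i,t_{i+1}]$ is a \emph{complex} commutator, and the definition of $k$-nilpotence only kills \emph{simple} commutators of weight $>k$. The paper invokes Theorem~\ref{Th:Hall} at the outset precisely for this: in a $k$-nilpotent group all complex commutators of weight $>k$ vanish. Without this, neither the dropping nor the finiteness of the set of norms (hence well-ordering of the disorder) is justified.

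\textbf{The inequality $N([u,v])>\max\{N(u),N(v)\}$ is always strict.} You first assert that $N([t_i,t_{i+1}])>_{\mathrm{norm}} N(t_i)$ ``is false in general,'' then talk yourself into the correct conclusion a few lines later. There is no subtlety: $\supp([u,v])=\supp(u)\cup\supp(v)\supseteq\supp(u)$, and $\weight([u,v])=\weight(u)+\weight(v)>\weight(u)$; so in the lexicographic norm order, $N([u,v])>N(u)$ always (strict containment in the first coordinate, or equality there with strict increase in the second). The paper records this as equation~\eqref{Eq:NormUp} and uses it cleanly. Your disorder bookkeeping also omits the case where some factor $w$ has $N(w)>N([u,v])$; the paper handles it explicitly by noting $N(w)>N([u,v])>N(u)$, so the affected entry $d(p)_{N(w),N([u,v])}$ again sits lexicographically above $(N(u),N(v))$.
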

\begin{proof}
Let $\weight = \weight_{X^{\pm 1}\cup G}$ and denote $\lenorm$ by $\le$. By Theorem \ref{Th:Hall}, all complex commutators of weight more than $k$ vanish on $G$. Up to equivalence, we can therefore ignore complex commutator polynomials of weight more than $k$, the set of norms is then finite, and we view $d(p)$ as an element of $(\mathbb N^m,\lelex)$ for a suitable integer $m$.

Up to equivalence, we can write $p$ as a product of constants, variables and their inverses. In this way, $p$ is a product of (complex) commutator polynomials of weight $1$. Let us construct a sequence $p_1=p$, $p_2$, $p_3$, $\dots$ of products of complex commutator polynomials as follows. If $p_n$ is given and $d(p_n)=(0,0,\dots)$, stop. Else there are factors $u$, $v$ of $p_n$ such that $u$ is to the left of $v$ and $N(u)>N(v)$. In fact, there must be such consecutive factors, so let us assume that $u$, $v$ are also consecutive. We then replace $uv$ in $p_n$ with $vu[u,v]$ to obtain $p_{n+1}$, an equivalent product of complex commutator polynomials with one more factor. Clearly,
\begin{equation}\label{Eq:NormUp}
    N([u,v]) = (\supp(u)\cup\supp(v),\weight(u)+\weight(v))>\mathrm{max}\{N(u),N(v)\}.
\end{equation}
On account of $uv$ being replaced with $vu$, $d(p_{n+1})_{N(u),N(v)}$ is reduced by $1$ compared to $d(p_n)_{N(u),N(v)}$. Every other change to $d(p_n)$ is on account of $[u,v]$ and some other factor $w$. Now, if $N([u,v])>N(w)$ then the change is to $d(p_n)_{N([u,v]),N(w)}$, and we note that $(N([u,v]),N(w))>(N(u),N(v))$ by \eqref{Eq:NormUp}. If $N([u,v])<N(w)$ then the change is to $d(p_n)_{N(w),N([u,v])}$, and we note that $(N(w),N([u,v]))>(N(u),N(v))$ because $N(w)>N([u,v])>N(u)$ by \eqref{Eq:NormUp}. Thus $d(p_{n+1})<_{\mathrm{lex}} d(p_n)$ in $\mathbb N^m$.

Since $(\mathbb N^m,\lelex)$ has no infinite strictly decreasing chains, the sequence $p_1$, $p_2$, $\dots$ must terminate in finitely many steps, say at $p_n$, by reaching $d(p_n)=(0,0,\dots)$.
\end{proof}

\begin{remark}
We will need only a weaker version of Lemma \ref{Lm:Reorder} in the proof of Theorem \ref{Th:MainGroups} and later on, where we organize the product so that all factors with the same support are adjacent (in the associative case) or form a subpolynomial (in the general case). It is therefore tempting to consider a simplified norm $N(t) = \supp(t)$. However, such a choice would not yield a strict inequality in \eqref{Eq:NormUp} when $\supp(u)$ properly contains $\supp(v)$. The commutator weight must be taken into consideration for the rewriting process to stop.
\end{remark}

\begin{theorem}[\cite{AE}]\label{Th:MainGroups}
A group is $k$-nilpotent if and only if it is $k$-supernilpotent.
\end{theorem}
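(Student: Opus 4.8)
The plan is to prove the two implications separately; both become short once we invoke Theorem~\ref{Th:Hall}, Lemma~\ref{Lm:Reorder} and Proposition~\ref{Pr:MainTechnical}. First I would record an elementary observation used in both directions: if $t$ is a complex commutator polynomial and $x\in\supp(t)$, then $t$ evaluates to $1$ whenever $x$ is set to $1$, regardless of the values of the other variables. I would prove this by induction on $\weight(t)$: if $\weight(t)=1$ then $t=x$; and if $t=[t_1,t_2]$ then $x$ lies in $\supp(t_1)$ or $\supp(t_2)$, say the former, so $t_1$ evaluates to $1$ by the inductive hypothesis, whence $t=[1,t_2]=1$. In particular every complex commutator polynomial is absorbing at $(1,\dots,1)$ into $1$ with respect to its own support.

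For the direction ``$k$-supernilpotent $\Rightarrow$ $k$-nilpotent'' I would use the left-normed simple commutator polynomial $c=[\dots[[x_1,x_2],x_3],\dots,x_{k+1}]$ of weight $k+1$, whose support is $\{x_1,\dots,x_{k+1}\}$. By the observation above it is an absorbing polynomial in $k+1$ variables, so if $G$ is $k$-supernilpotent then $c$ is constant on $G$; and since $c(1,\dots,1)=1$ it is identically $1$ on $G$. Every simple commutator of weight $k+1$ on $G$ is a value of $c$, so all of them are trivial, which is precisely $k$-nilpotence of $G$.

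For the converse, let $G$ be $k$-nilpotent and let $p$ be an absorbing polynomial (at $(1,\dots,1)$ into $1$, by the standing convention of Section~\ref{Sc:Absorbing}) with $\supp(p)=\{x_1,\dots,x_n\}$ and $n\ge k+1$; I want to show $p$ is constant. I would apply Lemma~\ref{Lm:Reorder} to write $p$, up to equivalence, as a product $t_1\cdots t_\ell$ of complex commutator polynomials with respect to $X^{\pm 1}\cup G$ whose norms are nondecreasing and each of weight at most $k$. Since $|\supp(t_i)|\le\weight(t_i)\le k<n$, every $\supp(t_i)$ is a \emph{proper} subset of $\{x_1,\dots,x_n\}$, and by the observation each $t_i$ is absorbing at $(1,\dots,1)$ into $1$ with respect to $\supp(t_i)$. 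Because $\lenorm$ compares norms by support before weight, the factors sharing a fixed support occur consecutively in $t_1\cdots t_\ell$; grouping this (associative) product into maximal blocks of equal support gives $p\equiv\prod_{S\in\mathcal S}q_S$, where $\mathcal S$ is the set of supports that occur, each $S\in\mathcal S$ is a proper subset of $\{1,\dots,n\}$, $q_S$ is the product of the block of factors with support $S$ (so $\supp(q_S)=\{x_i:i\in S\}$), and $q_S$ is again absorbing at $(1,\dots,1)$ into $1$, since a product of polynomials each absorbing over the same support is. Proposition~\ref{Pr:MainTechnical} then applies directly and yields that $p$ is constant. As $p$ was an arbitrary absorbing polynomial in at least $k+1$ variables, $G$ is $k$-supernilpotent.

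The heavy lifting is already done: Theorem~\ref{Th:Hall} bounds the commutator weight, Lemma~\ref{Lm:Reorder} reorganizes an arbitrary polynomial into a product of commutator polynomials of bounded weight in which same-support factors are adjacent, and Proposition~\ref{Pr:MainTechnical} is custom-built to close the argument. So I do not anticipate a serious obstacle beyond the bookkeeping, the most delicate point being why same-support factors may be grouped by mere re-bracketing of an associative product — which is precisely why Lemma~\ref{Lm:Reorder} is phrased with the two-part norm $N(t)=(\supp(t),\weight(t))$ rather than with weight alone, as the remark following that lemma anticipates.
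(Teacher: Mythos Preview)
Your proof is correct and follows essentially the same route as the paper's: the left-normed commutator of weight $k+1$ gives one direction, and for the other you invoke Lemma~\ref{Lm:Reorder} to sort the factors by norm, group same-support blocks, and feed the result to Proposition~\ref{Pr:MainTechnical}. Two cosmetic differences: the paper treats only $n=k{+}1$ (which suffices, since any absorbing polynomial in more variables restricts to an absorbing one in $k{+}1$), and the paper separately checks $p_\emptyset=1$ before invoking Proposition~\ref{Pr:MainTechnical}, whereas you rely on the vacuous absorbing condition for $S=\emptyset$ --- this is fine, since the proof of Proposition~\ref{Pr:MainTechnical} handles that case correctly; also, in your base case $\weight(t)=1$ you should allow $t=x^{-1}$ as well as $t=x$, but the conclusion is unchanged.
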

\begin{proof}
Let $G$ be a group. If $G$ is $k$-supernilpotent, consider the simple commutator term $t(x_1,\dots,x_{k+1})=[[\dots[[x_1,x_2],x_3]\dots],x_{k+1}]$ of weight $k+1$. Since $t$ is absorbing, it vanishes on $G$, and $G$ is therefore $k$-nilpotent.

Conversely, suppose that $G$ is $k$-nilpotent. Let $p$ be an absorbing polynomial on $G$ in variables $X=\{x_1,\dots,x_{k+1}\}$. By Lemma \ref{Lm:Reorder}, we can rewrite $p$ up to equivalence as a product $t_1\cdots t_\ell$ of complex commutator polynomials over $X^{\pm 1}\cup G$ with nondecreasing norms and weights at most $k$. Since the weights are at most $k$, $\supp(t_i)$ is a proper subset of $X$, for every $i$. For a proper subset $S$ of $\{1,\dots,k+1\}$, let $p_S$ be the product of all (consecutive) $t_i$ with $\supp(t_i)=\{x_s:s\in S\}$. Then surely $p$ is equivalent to $\prod p_S$, where the product ranges over all proper subsets $S$ of $\{1,\dots,k+1\}$.

We claim that every $p_S$ is absorbing on $G$. If $S\ne\emptyset$, every factor of $p_S$ is a complex commutator polynomial with at least one variable, hence absorbing. It follows that $1 = p(1,\dots,1)= p_\emptyset$ since $p$ is absorbing, and $p_\emptyset$ is therefore absorbing as well.

All assumptions of Proposition \ref{Pr:MainTechnical} are now satisfied and hence $p$ is constant, proving that $G$ is $k$-supernilpotent.
\end{proof}

\section{Background on loops}\label{sec:loops_lemmas}

Throughout this section, let $Q=(Q,\cdot,\rdiv,\ldiv,1)$ be a loop.

\subsection{One-sided inverses and inverses}

For every $a\in Q$ there exist unique elements $a^\lambda,a^\rho\in Q$ such that $a^\lambda a = 1 = aa^\rho$. We call $a^\lambda$ the \emph{left inverse} of $a$ and $a^\rho$ the \emph{right inverse of $a$}.
Of course, $a^\lambda = 1\rdiv a$ and $a^\rho = a\ldiv 1$. If the two one-sided inverses coincide then the (two-sided) inverse $a^\lambda = a^\rho$ will be denoted by $a^{-1}$.

If $S$ is an associative subloop of $Q$ then $S$ is a group, all elements of $S$ have two-sided inverses and we have $a\ldiv b = a^{-1}b$, $a\rdiv b = ab^{-1}$ for all $a,b\in S$.

\subsection{Commutator and associator terms}

In the variety of loops, a \emph{commutator term} is any binary term $\comm{x,y}$ such that the equivalence
\begin{displaymath}
    \comm{a,b}=1\qquad\Leftrightarrow\qquad ab=ba
\end{displaymath}
holds for all elements $a$, $b$ of every loop. Similarly, an \emph{associator term} is any ternary term $\assoc{x,y,z}$ such that the equivalence
\begin{displaymath}
    \assoc{a,b,c} = 1\qquad\Leftrightarrow\qquad a(bc)=(ab)c
\end{displaymath}
holds for all elements $a$, $b$, $c$ of every loop. Note that commutator and associator terms are absorbing, that is,
\begin{equation}\label{Eq:CAabsorbing}
    \comm{1,b} = \comm{a,1} = \comm{1,b,c} = \comm{a,1,c} = \comm{a,b,1} = 1
\end{equation}
whenever $a$, $b$, $c$ are loop elements.

We define the \emph{standard commutator term} $[x,y]$ by
\begin{displaymath}
    [x,y] =(yx)\ldiv (xy)
\end{displaymath}
and the \emph{standard associator term} $[x,y,z]$ by
\begin{displaymath}
    [x,y,z] = (x(yz))\ldiv ((xy)z).
\end{displaymath}
We will frequently use the identities $(xy)z = (x(yz))[x,y,z]$ and $xy = (yx)[x,y]$. Note that the standard commutator term in loops generalizes the standard commutator term $x^{-1}y^{-1}xy$ in groups.

A mapping $f:Q^k\to Q$ is said to be \emph{linear in the $i$th coordinate}, for some $1\le i\le k$, if for every $a_1,\dots,a_{i-1},a_{i+1},\dots,a_k\in Q$ the mapping $g:Q\to Q$ defined by $g(u)=f(a_1,\dots,a_{i-1},u,a_{i+1},\dots,a_k)$ is a homomorphism, that is, $g(uv)=g(u)g(v)$ holds for all $u,v\in Q$ (and therefore also $g(u\ldiv v) = g(u)\ldiv g(v)$ and $g(u\rdiv v) = g(u)\rdiv g(v)$ holds for all $u,v\in Q$). If $f:Q^k\to Q$ is linear in every coordinate, it is said to be \emph{multilinear}.

\subsection{The center and the nucleus}

The \emph{nucleus} of $Q$ is defined by
\begin{displaymath}
    \nuc{Q}=\{a\in Q: \assoc{a,u,v} = \assoc{u,a,v} = \assoc{u,v,a} =1\text{ for all }u,v\in Q\}.
\end{displaymath}
The nucleus is always an associative subloop but not necessarily a normal subloop of $Q$.

The \emph{center} of $Q$ is defined by
\begin{displaymath}
    Z(Q)=\{a\in Q: a\in\nuc{Q}\text{ and }\comm{a,u} = 1\text{ for all }u\in Q\}.
\end{displaymath}
The center is always a commutative, associative and normal subloop of $Q$. Elements of $Z(Q)$ are called \emph{central}.

The following lemmas will be helpful in the commutator-associator calculus of $3$-nilpotent loops.

\begin{lemma}\label{Lm:CACalculus}
Let $Q$ be a loop and $a,b,c\in Q$ such that $[a,b,c]\in\nuc Q$. Then $a(bc) = ((ab)c)[a,b,c]^{-1}$.
\end{lemma}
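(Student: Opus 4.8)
The statement is purely a one-line manipulation of the defining identity for the standard associator, combined with the fact that a nuclear element can be moved freely through products. I will start from the identity $(xy)z = (x(yz))[x,y,z]$, which the excerpt explicitly records as one of the "frequently used" identities. Substituting $a,b,c$ gives
\begin{displaymath}
(ab)c = (a(bc))[a,b,c].
\end{displaymath}
The goal is to solve this for $a(bc)$, i.e.\ to "divide" both sides on the right by $[a,b,c]$.

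The only subtlety is that right-division by an element $w$ is given by $u \mapsto u\rdiv w$, and in a general loop $(uw)\rdiv w = u$ always holds, so from $(ab)c = (a(bc))w$ with $w=[a,b,c]$ we immediately get $a(bc) = ((ab)c)\rdiv w$. To rewrite $\rdiv w$ as right multiplication by $w^{-1}$, I need $w$ to have a two-sided inverse and to satisfy $(uv)\rdiv w = u(v\rdiv w)$-type cancellation when $v\rdiv w$ is evaluated inside a product — and this is exactly where the hypothesis $w=[a,b,c]\in\nuc Q$ enters. Since $\nuc Q$ is an associative subloop (stated in Subsection on the center and nucleus), every element of $\nuc Q$ has a two-sided inverse $w^{-1}$, and for a nuclear $w$ one has $u = (uw)w^{-1}$ and more generally nuclear elements associate with everything, so $((ab)c)\rdiv w = ((ab)c)w^{-1}$. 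Combining, $a(bc) = ((ab)c)[a,b,c]^{-1}$, as claimed.

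The \textbf{main (and essentially only) obstacle} is bookkeeping about one-sided versus two-sided inverses and making sure the passage from $\rdiv [a,b,c]$ to right multiplication by $[a,b,c]^{-1}$ is legitimate: in an arbitrary loop $u\rdiv w \ne u w^{-1}$ in general, and it is precisely the nuclearity of $[a,b,c]$ — hence membership in an associative subloop with genuine inverses — that licenses the rewriting. No deeper structure theory of $3$-nilpotent loops is needed here; the lemma is a lubricating identity to be used later in the commutator–associator calculus, and the proof is a two-line computation once the nuclear-cancellation remark is in place.
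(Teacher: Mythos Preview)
Your proposal is correct and follows essentially the same approach as the paper. The paper starts from $(ab)c = (a(bc))[a,b,c]$, multiplies both sides on the right by $[a,b,c]^{-1}$, and uses nuclearity of $[a,b,c]$ to reassociate $(a(bc))[a,b,c]\cdot[a,b,c]^{-1} = a(bc)\cdot [a,b,c][a,b,c]^{-1} = a(bc)$; you instead pass through the right division $a(bc) = ((ab)c)\rdiv[a,b,c]$ and then rewrite $\rdiv[a,b,c]$ as multiplication by $[a,b,c]^{-1}$ via nuclearity---the same ingredients in a trivially different order.
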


\begin{proof}
If $[a,b,c]\in\nuc{Q}$ then the two-sided inverse of $[a,b,c]$ exists, and from $(ab)c = (a(bc))[a,b,c]$ we deduce
\begin{displaymath}
    ((ab)c)[a,b,c]^{-1} = (a(bc))[a,b,c]\cdot [a,b,c]^{-1} = a(bc)\cdot [a,b,c][a,b,c]^{-1} = a(bc).\qedhere
\end{displaymath}
\end{proof}

\begin{lemma}\label{Lm:Useful}
Let $Q$ be a loop such that $Q/Z(Q)$ is a group. Then the following identities hold:
\begin{enumerate}
\item[(i)] $x\rdiv y = xy^\lambda[x,y^\lambda,y]^{-1}$ and $x\ldiv y = x^\rho y [x,x^\rho,y]$,
\item[(ii)] $(xy)^\lambda = y^\lambda x^\lambda[y^\lambda,x^\lambda,x]^{-1}[y^\lambda x^\lambda,x,y]$ and $(xy)^\rho = y^\rho x^\rho[x,y,y^\rho]^{-1}[xy,y^\rho,x^\rho]$,
\item[(iii)] $aa^\lambda=[a,a^\lambda]=[a^\lambda,a]^{-1}\in Z(Q)$ for every $a\in Q$,
\item[(iv)]  $x^\rho = x^\lambda [x,x^\lambda]^{-1}$,
\item[(v)] $(x^\lambda)^\lambda = x[x,x^\lambda]^{-1}$,
\item[(vi)] $[xy,(xy)^\lambda]=[x,x^\lambda][y,y^\lambda][x,y,y^\lambda][xy,y^\lambda,x^\lambda]^{-1}[y^\lambda,x^\lambda,x]^{-1}[y^\lambda x^\lambda,x,y]$.
\end{enumerate}
\end{lemma}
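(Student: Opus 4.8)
The plan is to exploit two consequences of the hypothesis that $\bar Q:=Q/Z(Q)$ is a group; write $Z=Z(Q)$. First, since $\bar Q$ is associative, for all $a,b,c\in Q$ the associator $[a,b,c]=(a(bc))\ldiv((ab)c)$ lies in $Z$; in particular it has a two-sided inverse and, by Lemma~\ref{Lm:CACalculus}, $a(bc)=((ab)c)[a,b,c]^{-1}$. Thus whenever we reparenthesize a product the discrepancy is a \emph{central} associator, and central elements may be slid freely in and out of any product and any parenthesization. Second, in the group $\bar Q$ the left inverse of $\bar a$ coincides with its two-sided inverse, and $\overline{a^\lambda}=\bar a^{-1}=\overline{a^\rho}$; hence $aa^\lambda\in Z$ and $a^\rho\equiv a^\lambda\pmod Z$. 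With these two principles in hand, all six identities reduce to routine computations in which one shuffles central correction terms and applies a uniqueness statement for one-sided quotients or inverses.

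I would prove (iii) first, as the rest builds on it. From $uv=(vu)[u,v]$ with $u=a$, $v=a^\lambda$ we get $aa^\lambda=(a^\lambda a)[a,a^\lambda]=[a,a^\lambda]$, which is central by the remarks above; and $[a^\lambda,a]=(aa^\lambda)\ldiv(a^\lambda a)=(aa^\lambda)\ldiv 1=(aa^\lambda)^{-1}=[a,a^\lambda]^{-1}$, using that $aa^\lambda\in Z$ has a two-sided inverse. Parts (iv) and (v) are then one-line verifications: $(x[x,x^\lambda]^{-1})x^\lambda=(xx^\lambda)[x,x^\lambda]^{-1}=[x,x^\lambda][x,x^\lambda]^{-1}=1$, so $x[x,x^\lambda]^{-1}$ is the unique left inverse of $x^\lambda$, giving (v); (iv) is the mirror computation $x(x^\lambda[x,x^\lambda]^{-1})=1$.

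For (i) and (ii) I would likewise verify the characterizing identity of the term and invoke uniqueness. For (i): if $r$ denotes the proposed value of $x\rdiv y$, then $ry=((xy^\lambda)y)[x,y^\lambda,y]^{-1}=(x(y^\lambda y))[x,y^\lambda,y][x,y^\lambda,y]^{-1}=x$, after extracting the central factor and using $(xy^\lambda)y=(x(y^\lambda y))[x,y^\lambda,y]$; the formula for $x\ldiv y$ is entirely analogous, verifying $x\cdot(x^\rho y[x,x^\rho,y])=y$ via $x(x^\rho y)=((xx^\rho)y)[x,x^\rho,y]^{-1}$. For (ii), writing $s$ for the claimed value of $(xy)^\lambda$, one isolates the two central factors of $s$ and is left to compute $(y^\lambda x^\lambda)(xy)$; reparenthesizing stepwise, $((y^\lambda x^\lambda)x)y=(y^\lambda(x^\lambda x))[y^\lambda,x^\lambda,x]=[y^\lambda,x^\lambda,x]$ after $x^\lambda x=1$ and moving the central factor, hence $(y^\lambda x^\lambda)(xy)=[y^\lambda,x^\lambda,x][y^\lambda x^\lambda,x,y]^{-1}$, and combining with the two central factors of $s$ cancels everything to $1$. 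The formula for $(xy)^\rho$ comes out the same way from $(xy)(xy)^\rho=1$.

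Finally, (vi) is assembled from (ii) and (iii): by (iii), $[xy,(xy)^\lambda]=(xy)(xy)^\lambda$; substituting the formula of (ii) for $(xy)^\lambda$ and peeling off its two central correction factors reduces the problem to evaluating $(xy)(y^\lambda x^\lambda)$. Reparenthesizing, $(xy)(y^\lambda x^\lambda)=(((xy)y^\lambda)x^\lambda)[xy,y^\lambda,x^\lambda]^{-1}$, and $((xy)y^\lambda)x^\lambda=((x(yy^\lambda))x^\lambda)[x,y,y^\lambda]=(xx^\lambda)[y,y^\lambda][x,y,y^\lambda]=[x,x^\lambda][y,y^\lambda][x,y,y^\lambda]$ after collecting central factors and using (iii) for $yy^\lambda$ and $xx^\lambda$; reinstating the two deferred factors $[y^\lambda,x^\lambda,x]^{-1}$ and $[y^\lambda x^\lambda,x,y]$ yields exactly the stated identity. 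I expect no conceptual obstacle here: the only real work, and the place where care is needed, is the disciplined bookkeeping of the central associators picked up at each reparenthesization in (ii) and (vi), and keeping track of which one-sided uniqueness statement is being invoked in each part.
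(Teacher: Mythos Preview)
Your proof is correct and follows essentially the same route as the paper: exploit centrality of associators (so Lemma~\ref{Lm:CACalculus} applies), reparenthesize, and collect the central correction terms, verifying each identity by checking the defining property of the division or inverse in question. Your treatment of (iii) is in fact a bit cleaner than the paper's: you observe directly that $\overline{aa^\lambda}=\bar a\,\bar a^{-1}=\bar 1$ in the group $Q/Z(Q)$, hence $aa^\lambda\in Z(Q)$, whereas the paper reaches the same conclusion via an auxiliary computation of $x^\lambda\ldiv x^\rho$.
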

\begin{proof}
All associators are central since $Q/Z(Q)$ is a group, hence Lemma \ref{Lm:CACalculus} applies. For part (i), $x = x(y^\lambda y) = (xy^\lambda)y[x,y^\lambda,y]^{-1}$ implies $x\rdiv y = (xy^\lambda)[x,y^\lambda,y]^{-1}$. Dually, $y = (xx^\rho)y = x(x^\rho y)[x,x^\rho,y]$ implies $x\ldiv y = x^\rho y [x,x^\rho,y]$. Part (ii) follows from
\begin{displaymath}
    1=(y^\lambda\cdot x^\lambda x)y = (y^\lambda x^\lambda\cdot x)y[y^\lambda,x^\lambda,x]^{-1} = (y^\lambda x^\lambda)(xy)[y^\lambda,x^\lambda,x]^{-1}[y^\lambda x^\lambda,x,y]
\end{displaymath}
and from the dual identity
\begin{displaymath}
    1 = (x\cdot yy^\rho)\cdot x^\rho = (xy\cdot y^\rho)x^\rho[x,y,y^\rho]^{-1} = (xy)(y^\rho x^\rho)[x,y,y^\rho]^{-1}[xy,y^\rho,x^\rho].
\end{displaymath}

The group $Q/Z(Q)$ has two-sided inverses. For every $a\in Q$, we then have $a^\lambda Z(Q) = (aZ(Q))^\lambda = (aZ(Q))^\rho = a^\rho Z(Q)$ and therefore $a^\lambda\ldiv a^\rho\in Z(Q)$. Now,
\begin{displaymath}
    xx^\rho = 1 = x^\lambda x = (xx^\lambda)[x^\lambda,x] = x\cdot x^\lambda[x^\lambda,x][x,x^\lambda,[x^\lambda,x]],
\end{displaymath}
so $x^\rho = x^\lambda[x^\lambda,x][x,x^\lambda,[x^\lambda,x]]$ and thus
\begin{equation}\label{Eq:UsefulAux}
    x^\lambda \ldiv x^\rho = [x^\lambda,x][x,x^\lambda,[x^\lambda,x]].
\end{equation}
We have $aa^\lambda = (a^\lambda a)[a,a^\lambda] = [a,a^\lambda]$ and $1=a^\lambda a = (aa^\lambda)[a^\lambda,a]$, so $[a^\lambda,a] = (aa^\lambda)\ldiv 1$.
Since $a^\lambda\ldiv a^\rho$ is central and all associators are central, it follows from \eqref{Eq:UsefulAux} that $[a^\lambda,a]=(aa^\lambda)\ldiv 1$ is central, thus also that $aa^\lambda$ is central and $aa^\lambda=[a^\lambda,a]^{-1}$, finishing (iii). Then the associator $[x,x^\lambda,[x^\lambda,x]]$ vanishes in \eqref{Eq:UsefulAux} and thus $x^\rho = x^\lambda[x^\lambda,x] = x^\lambda[x,x^\lambda]^{-1}$, proving (iv).

For (v), $(x^\lambda)^\lambda x^\lambda = 1 = x^\lambda x = xx^\lambda[x^\lambda,x]$ yields $(x^\lambda)^\lambda = x[x^\lambda,x] = x[x,x^\lambda]^{-1}$. Finally,
\begin{align*}
    [xy,(xy)^\lambda] &=(xy)(xy)^\lambda=(xy)(y^\lambda x^\lambda) [y^\lambda,x^\lambda,x]^{-1}[y^\lambda x^\lambda,x,y],\\
    (xy)(y^\lambda x^\lambda) &= (xy\cdot y^\lambda)x^\lambda[xy,y^\lambda,x^\lambda]^{-1},\\
    (xy\cdot y^\lambda)x^\lambda &= (x\cdot yy^\lambda)x^\lambda[x,y,y^\lambda],\\
    (x\cdot yy^\lambda)x^\lambda & = (xx^\lambda)(yy^\lambda) = [x,x^\lambda][y,y^\lambda],
\end{align*}
using $yy^\lambda\in Z(Q)$ in the last equality. We get (vi) by combining the last four identities.
\end{proof}

\begin{lemma}\label{Lm:LambdaPowers}
Let $Q$ be a loop such that $Q/Z(Q)$ is a group. For $a\in Q$ let $f(a)=a^\lambda$. Then for any integer $n\ge 0$ we have
\begin{displaymath}
    f^n(a) = \left\{\begin{array}{ll}
        a[a,a^\lambda]^{-n/2},&\text{ if $n$ is even},\\
        a^\lambda[a,a^\lambda]^{(n-1)/2},&\text{ if $n$ is odd}.
    \end{array}\right.
\end{displaymath}
\end{lemma}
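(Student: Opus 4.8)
The plan is to prove Lemma~\ref{Lm:LambdaPowers} by induction on $n$, using parts (iii), (iv) and (v) of Lemma~\ref{Lm:Useful} as the computational engine. The base cases $n=0$ (where $f^0(a)=a=a[a,a^\lambda]^0$) and $n=1$ (where $f^1(a)=a^\lambda=a^\lambda[a,a^\lambda]^0$) are immediate from the stated formula. For the inductive step, I would actually do a step of size one but split into the two parities: assume the formula holds for $n$ and deduce it for $n+1$ by applying $f$, i.e.\ by taking the left inverse of the displayed expression. The key point is that $[a,a^\lambda]$ is central by Lemma~\ref{Lm:Useful}(iii), so left inverses distribute over the product and over powers: for central $z$ we have $(bz)^\lambda = b^\lambda z^{-1}$, since $(b^\lambda z^{-1})(bz) = b^\lambda b \cdot z^{-1}z = 1$ using centrality (and associativity with central elements).

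Carrying this out: if $n$ is even, then $f^n(a) = a[a,a^\lambda]^{-n/2}$, so
\begin{displaymath}
    f^{n+1}(a) = \bigl(a[a,a^\lambda]^{-n/2}\bigr)^\lambda = a^\lambda [a,a^\lambda]^{n/2} = a^\lambda[a,a^\lambda]^{((n+1)-1)/2},
\end{displaymath}
which is exactly the odd-case formula for $n+1$. If $n$ is odd, then $f^n(a) = a^\lambda[a,a^\lambda]^{(n-1)/2}$, so
\begin{displaymath}
    f^{n+1}(a) = \bigl(a^\lambda[a,a^\lambda]^{(n-1)/2}\bigr)^\lambda = (a^\lambda)^\lambda [a,a^\lambda]^{-(n-1)/2}.
\end{displaymath}
Now invoke Lemma~\ref{Lm:Useful}(v), which gives $(a^\lambda)^\lambda = a[a,a^\lambda]^{-1}$, so the right-hand side becomes $a[a,a^\lambda]^{-1}[a,a^\lambda]^{-(n-1)/2} = a[a,a^\lambda]^{-(n+1)/2}$, which is the even-case formula for $n+1$. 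This closes the induction.

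The only genuinely delicate point — the ``main obstacle'', though it is minor — is justifying that $(bz)^\lambda = b^\lambda z^{-1}$ and $(b z^m)^\lambda = b^\lambda z^{-m}$ for $z$ central, and that $(a^\lambda)^\lambda$ combines cleanly with a central power; all of this rests on $Z(Q)$ being a central, associative, normal subloop together with Lemma~\ref{Lm:Useful}(iii)--(v), so these manipulations are routine and can be stated without belaboring the bookkeeping. One should also note that $[a,a^\lambda] = [a^\lambda,a]^{-1}$ (again Lemma~\ref{Lm:Useful}(iii)) if one prefers to phrase intermediate steps via $[a^\lambda,a]$; this does not affect the final form. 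I would present the argument compactly, treating the parity split as the two halves of a single inductive step and citing Lemma~\ref{Lm:Useful} for each algebraic identity used.
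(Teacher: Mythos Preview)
Your proof is correct and follows essentially the same route as the paper: induction on $n$ with a parity split, using centrality of $[a,a^\lambda]$ (Lemma~\ref{Lm:Useful}(iii)) to get $(bz)^\lambda=b^\lambda z^{-1}$ for central $z$, and Lemma~\ref{Lm:Useful}(v) for $(a^\lambda)^\lambda$. The only cosmetic difference is that the paper derives $(bz)^\lambda=b^\lambda z^{-1}$ by specializing the general formula in Lemma~\ref{Lm:Useful}(ii), whereas you verify it directly.
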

\begin{proof}
We will use Lemma \ref{Lm:Useful} throughout. First note that the powers $[a,a^\lambda]^m$ are well-defined since $[a,a^\lambda]$ is central. For any central element $c$ we have $c^\lambda = c^{-1}$.
Now, $f^0(a)=a$, $f^1(a)=a^\lambda$ and $f^2(a)=a[a,a^\lambda]^{-1}$, in accordance with the formula. Since central elements can be omitted from associators, the formula $(ab)^\lambda = b^\lambda a^\lambda[b^\lambda,a^\lambda,a]^{-1}[b^\lambda a^\lambda,a,b]$ reduces to $(ab)^\lambda = a^\lambda b^{-1}$ when $b$ is central. Therefore, if $n$ is even and $f^n(a)=a[a,a^\lambda]^{-n/2}$ then $f^{n+1}(a) = (a[a,a^\lambda]^{-n/2})^\lambda = a^\lambda [a,a^\lambda]^{n/2}$. Similarly, if $n$ is odd and $f^n(a) = a^\lambda[a,a^\lambda]^{(n-1)/2}$ then $f^{n+1}(a) = (a^\lambda)^\lambda [a,a^\lambda]^{-(n-1)/2} = a[a,a^\lambda]^{-1}[a,a^\lambda]^{-(n-1)/2} = a[a,a^\lambda]^{-(n+1)/2}$.
\end{proof}

\section{Equational bases for supernilpotence in loops}\label{sec:sn in loops}

In this section we find an equational basis for $k$-supernilpotent loops for $k\in\{1,2,3\}$.

\subsection{Supernilpotent loops of class 1 and 2}

\begin{proposition}\label{Pr:SupernilpotentLoops12}
A loop is $1$-supernilpotent if and only if it is an abelian group. A loop is $2$-supernilpotent if and only if it is a $2$-nilpotent group.
\end{proposition}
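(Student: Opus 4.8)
The plan is to reduce both equivalences to Theorem~\ref{Th:MainGroups}, using the single observation that the standard associator term $[x,y,z]=(x(yz))\ldiv((xy)z)$ is an absorbing polynomial whose support has size $3$, while the standard commutator term $[x,y]=(yx)\ldiv(xy)$ is an absorbing polynomial whose support has size $2$.

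For the two backward implications I would simply note that an abelian group is $1$-nilpotent and that a $2$-nilpotent group is (trivially) $2$-nilpotent, so Theorem~\ref{Th:MainGroups} yields $1$-supernilpotence and $2$-supernilpotence respectively. (Alternatively, for abelian groups there is a one-line direct argument: every polynomial is equivalent to one of the form $x_1^{m_1}\cdots x_n^{m_n}c$, and absorption into $1$ forces $c=1$ together with each $x_i^{m_i}$ inducing the trivial map, so $p$ is constant whenever $n\ge 2$.)

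For the forward implications I would argue as follows. If $Q$ is $1$-supernilpotent, then every absorbing polynomial with at least two variables is constant; applying this to $[x,y]$ and to $[x,y,z]$ and evaluating at the all-$1$ tuple gives $[a,b]=1=[a,b,c]$ for all $a,b,c\in Q$, so $Q$ is a commutative, associative loop, i.e.\ an abelian group. If $Q$ is $2$-supernilpotent, then every absorbing polynomial with at least three variables is constant; applying this to $[x,y,z]$ shows $Q$ is associative, hence a group. Since $\ldiv$ and $\rdiv$ are term-definable from $\cdot$ and $^{-1}$ on a group, the polynomial operations of $Q$ viewed as a loop agree with those of $Q$ viewed as a group (as already noted in Section~\ref{Sc:Absorbing}), so $Q$ is a $2$-supernilpotent group, and Theorem~\ref{Th:MainGroups} makes it $2$-nilpotent.

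I do not expect a genuine obstacle here. The whole content is that the standard associator and commutator terms have arity $3$ and $2$, so every $2$-supernilpotent loop is already associative and every $1$-supernilpotent loop is moreover commutative; after that, Theorem~\ref{Th:MainGroups} closes both statements, the only point of care being the harmless identification of loop-polynomials with group-polynomials on a group. The genuinely substantive work of the paper begins at class $3$, where associativity is no longer automatic.
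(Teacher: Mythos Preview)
Your proposal is correct and follows essentially the same approach as the paper's proof: use that the commutator and associator terms are absorbing to force associativity (and commutativity) from supernilpotence, then invoke Theorem~\ref{Th:MainGroups} for the group equivalences. Your version is simply more explicit about the identification of loop-polynomials with group-polynomials in the $2$-supernilpotent case, a step the paper leaves implicit.
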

\begin{proof}
Recall that commutator and associator terms are absorbing in loops. Therefore, $2$-supernilpotent loops are groups and $1$-supernilpotent loops are abelian groups. The converse implications follow from Theorem \ref{Th:MainGroups}.
\end{proof}

\begin{example}\label{Ex:6}
This example shows that $2$-nilpotent loops need not be $k$-supernilpotent, for any $k$. The following loop $Q$ of order $6$ has center $Z(Q)=\{1,4\}$ and is $2$-nilpotent. But it cannot be supernilpotent since it is directly indecomposable (there is actually no subloop of order 3), thus violating \cite[Lemma 7.6]{AM}.
\begin{displaymath}
\begin{array}{r|rrrrrr}
    Q&1&2&3&4&5&6\\
    \hline
    1&1&2&3&4&5&6\\
    2&2&3&1&5&6&4\\
    3&3&1&5&6&4&2\\
    4&4&5&6&1&2&3\\
    5&5&6&4&2&3&1\\
    6&6&4&2&3&1&5
\end{array}
\end{displaymath}
\end{example}

\subsection{Commutator and associator identities for 3-supernilpotent loops}

Fix a commutator term $c = \comm{\_,\_}$ and an associator term $a = \assoc{\_,\_,\_}$. Let $Y$ be the set of variables, their left inverses (i.e., symbols $x^\lambda$) and constants. \emph{Complex commutator/associator polynomials} in loops and their \emph{weights} with respect to $Y$ are defined in analogy with complex commutator polynomials in groups, with the added rule that if $t_1$, $t_2$, $t_3$ are complex commutator/associator polynomials of weight $k_1$, $k_2$, $k_3$, respectively, then $\assoc{t_1,t_2,t_3}$ is a complex commutator/associator polynomial of weight $k_1+k_2+k_3$.

Let $\mathcal I_{c,a}$ consist of the following identities:
\begin{align}
    1&=\comm{x,\assoc{y,u,v}},\label{Eq:AZ1} \\
    1&=\assoc{x,y,\assoc{u,v,w}} = \assoc{x,\assoc{u,v,w},y} = \assoc{\assoc{u,v,w},x,y},\label{Eq:AZ2}\\
    1&=\assoc{x,y,\comm{u,v}}=\assoc{x,\comm{u,v},y}=\assoc{\comm{u,v},x,y},\label{Eq:CN}\\
    1&=\comm{x,\comm{y,\comm{u,v}}} = \comm{x,\comm{\comm{u,v},y}},\label{Eq:CCCtrivial1}\\
    1&=\comm{\comm{y,\comm{u,v}},x} = \comm{\comm{\comm{u,v},y},x},\label{Eq:CCCtrivial2}\\
    1&=\comm{\comm{x,y},\comm{u,v}},\label{Eq:CCcommute}\\
    \assoc{xy,u,v}&=\assoc{x,u,v}\,\assoc{y,u,v},\label{Eq:AM1}\\
    \assoc{u,xy,v}&=\assoc{u,x,v}\,\assoc{u,y,v},\label{Eq:AM2}\\
    \assoc{u,v,xy}&=\assoc{u,v,x}\,\assoc{u,v,y}.\label{Eq:AM3}
\end{align}
Note that the identities \eqref{Eq:AZ1}--\eqref{Eq:AZ2} say that associators are in the center, \eqref{Eq:CN} says that commutators are in the nucleus, \eqref{Eq:CCCtrivial1}--\eqref{Eq:CCcommute} say that all complex commutators of weight $4$ vanish, and \eqref{Eq:AM1}--\eqref{Eq:AM3} say that associators are multilinear.

\begin{proposition}\label{Pr:3snImpliesI}
For any choice of the commutator term $c$ and the associator term $a$, every $3$-supernilpotent loop satisfies the identities $\mathcal I_{c,a}$.
\end{proposition}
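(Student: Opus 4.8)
The plan is to verify each identity in $\mathcal I_{c,a}$ separately by exhibiting, in each case, a polynomial that is absorbing in four variables (at $(1,\dots,1)$ into $1$) and whose being constant forces the identity. The unifying mechanism is that commutator terms $c$ and associator terms $a$ are absorbing by definition (equation \eqref{Eq:CAabsorbing}), so any term built by plugging variables into $c$ and $a$ in a way that uses all of the relevant variables is itself absorbing; then $3$-supernilpotence says such a term in four variables must induce a constant function, and because it vanishes when any argument is $1$ (absorption), that constant is $1$. So the whole proof reduces to checking, identity by identity, that the left-hand side minus (in loop terms: "equals $1$ iff") the right-hand side is witnessed by an absorbing $4$-ary polynomial.

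First I would dispatch the simplest family, \eqref{Eq:CCCtrivial1}, \eqref{Eq:CCCtrivial2}, \eqref{Eq:CCcommute}: each left-hand side is literally a complex commutator polynomial in the four variables $x,y,u,v$ (e.g.\ $\comm{x,\comm{y,\comm{u,v}}}$, $\comm{\comm{x,y},\comm{u,v}}$, etc.), hence absorbing in four variables, hence constant, hence identically $1$. For \eqref{Eq:AZ1}, the term $\comm{x,\assoc{y,u,v}}$ is absorbing in $x,y,u,v$ (it is $1$ if $x=1$ since $\comm{1,\_}=1$, and $1$ if any of $y,u,v$ is $1$ since then $\assoc{y,u,v}=1$ and $\comm{\_,1}=1$), so it is constant and therefore $1$; by definition of commutator term this says $\assoc{y,u,v}$ commutes with every $x$, i.e.\ associators are central once we also get \eqref{Eq:AZ2}. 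For \eqref{Eq:AZ2} and \eqref{Eq:CN}, each displayed term is of the form $\assoc{\cdot,\cdot,\cdot}$ with one slot filled by a nested associator or commutator in $u,v,w$ (resp.\ $u,v$) and the others by fresh variables — again absorbing in all four (or five) variables, hence constant, hence $1$, which by definition of associator term says the inner element associates with everything in the relevant position.

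The multilinearity identities \eqref{Eq:AM1}--\eqref{Eq:AM3} are the one case that does not immediately have the shape "an absorbing term equals $1$", and this is the step I expect to be the main obstacle. Here I would argue as follows for \eqref{Eq:AM1} (the others are symmetric): consider the polynomial $p(x,y,u,v) = \assoc{xy,u,v}\ldiv(\assoc{x,u,v}\assoc{y,u,v})$, or more conveniently work with the standard associator $[\,,\,]$ first and then transfer. The key point is that from \eqref{Eq:AZ2} and \eqref{Eq:CN} (already established) all associators lie in the center, so we may freely rearrange, invert, and apply Lemma~\ref{Lm:CACalculus}; expanding $\assoc{xy,u,v}$ via the identity $(ab)c=(a(bc))\assoc{a,b,c}$-style manipulations, one reduces $p$ up to equivalence to a product of complex commutator/associator polynomials each of which has support a proper subset of $\{x,y,u,v\}$ and is absorbing on that subset. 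Concretely $p$ should come out as a product of terms like $\assoc{x,y,\text{(something in }u,v)}$ or commutators thereof — each absorbing, each of arity $\le 3$. Then Proposition~\ref{Pr:MainTechnical} applies: $p$ is equivalent to such a product, hence $p$ is constant, and since $p$ is itself absorbing in all four variables (both sides vanish when $u=1$ or $v=1$, and when $x=1$ or $y=1$ one side is $\assoc{y,u,v}$ resp.\ $\assoc{x,u,v}$ and the other side equals it), the constant is $1$, giving \eqref{Eq:AM1}. The delicate part is making the expansion honest: one must check that the associator-valued corrections produced along the way genuinely have support missing at least one of $x,y,u,v$, which is exactly where the already-proven centrality and the weight-$4$ vanishing identities \eqref{Eq:CCCtrivial1}--\eqref{Eq:CCcommute} are used to kill any leftover deep nestings. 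Once \eqref{Eq:AM1}--\eqref{Eq:AM3} are in hand for the standard associator, the statement for an arbitrary associator term $a$ follows because $a$ and $[\,,\,,\,]$ agree modulo the center (both are absorbing ternary terms and their quotient is central by the identities already shown), so multilinearity of one transfers to the other.
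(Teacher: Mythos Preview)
Your treatment of \eqref{Eq:AZ1}--\eqref{Eq:CCcommute} is correct and matches the paper.

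For \eqref{Eq:AM1}--\eqref{Eq:AM3} you have overcomplicated things and left a gap. You form $p(x,y,u,v)=\assoc{xy,u,v}\ldiv(\assoc{x,u,v}\,\assoc{y,u,v})$ and propose to expand it as a product of absorbing polynomials of proper support so that Proposition~\ref{Pr:MainTechnical} applies. That expansion is never carried out, and carrying it out honestly would essentially require the machinery of Lemma~\ref{Lm:LoopReorder}, i.e., the tools built for the \emph{converse} direction. But the detour is unnecessary, and in fact the complete argument is already buried in your own parenthetical: you verify that $p$ is absorbing in all four variables (it equals $1$ whenever any one of $x,y,u,v$ is $1$). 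By the very \emph{definition} of $3$-supernilpotence, an absorbing polynomial in four variables is constant, so $p\equiv 1$ immediately. This is exactly the paper's proof: form $t\rdiv s$, check directly that it absorbs, done. Proposition~\ref{Pr:MainTechnical} is a tool for \emph{establishing} supernilpotence from a decomposition, not for \emph{using} the hypothesis of supernilpotence; here you are using the hypothesis, so no decomposition is needed. Likewise your final ``transfer to an arbitrary associator term'' step is superfluous and imprecisely justified: the direct absorption check works verbatim for any commutator term $c$ and any associator term $a$, since only \eqref{Eq:CAabsorbing} is used.
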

\begin{proof}
Recall the identity \eqref{Eq:CAabsorbing} that shows that every commutator and associator is absorbing. For each identity of the form $t=1$ from among \eqref{Eq:AZ1}--\eqref{Eq:CCcommute}, we can check that $t$ is an absorbing term in at least $4$ variables, therefore constant and equal to $1$. For each identity of the form $t=s$ from among \eqref{Eq:AM1}--\eqref{Eq:AM3}, observe that $t/s$ is an absorbing term in at least $4$ variables and therefore it is constant, equal to $1$. For instance, to verify \eqref{Eq:AM1}, we form the term $\assoc{xy,u,v}\rdiv( \assoc{x,u,v}\,\assoc{y,u,v} )$ and note that with $x=1$ it reduces to $\assoc{y,u,v}\rdiv (1\cdot\assoc{y,u,v}) = 1$, similarly with $y=1$, and with $u=1$ or $v=1$ it reduces to $1\rdiv (1\cdot 1)=1$.
\end{proof}

\subsection{Supernilpotent loops of class 3}

For a set $X$, let $X^\lambda = \{x^\lambda:x\in X\}$. We could formulate the next result more generally but it will suffice for our purposes as stated.

\begin{lemma}\label{Lm:SimplifyAssoc}
Let $Q$ be a loop satisfying the identities $\eqref{Eq:AZ1}$, $\eqref{Eq:AZ2}$ and $\eqref{Eq:AM1}$--$\eqref{Eq:AM3}$ for the standard commutator $c$ and the standard associator $a$. Let $p,q,r$ be polynomials on $Q$ with variables from $X$ such that the only operations appearing in $p,q,r$ are the multiplication and the left inverse. Then:
\begin{enumerate}
	\item[(i)] The polynomial $[p,q,r]$ is equivalent to a product of polynomials of the form $[u,v,w]^{\pm1}$ where $u,v,w\in X\cup X^\lambda\cup Q$.
	\item[(ii)] The polynomial $[p,p^\lambda]$ is equivalent to a product of polynomials of the form $[u,v,w]^{\pm1}$ and $[u,u^\lambda]^{\pm1}$ where $u,v,w\in X\cup X^\lambda\cup Q$.
\end{enumerate}
\end{lemma}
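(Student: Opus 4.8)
The plan is to reduce everything to the multilinearity of the standard associator, which identities \eqref{Eq:AM1}--\eqref{Eq:AM3} provide, together with the centrality of associators from \eqref{Eq:AZ1}--\eqref{Eq:AZ2}. First I would record the consequences of the hypotheses that we will actually use: all associators lie in $Z(Q)$, so $Q/Z(Q)$ is a group, every associator has a two-sided inverse and central elements may be freely inserted, deleted, and reordered inside any expression. In particular Lemma \ref{Lm:Useful} is available. Also, because associators are central, the map $[u,v,w]$ is genuinely multilinear in each slot in the strong sense that $[uv,\cdot,\cdot]=[u,\cdot,\cdot][v,\cdot,\cdot]$, $[u^\lambda,\cdot,\cdot]=[u,\cdot,\cdot]^{-1}$, and likewise in the other two coordinates — the last because multilinearity plus $g(u\ldiv v)=g(u)\ldiv g(v)$ forces $g(u^\lambda)=g(u)^{-1}$ for a homomorphism $g$ into a central (hence abelian group) image.

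For part (i), I would argue by structural induction on the total number of occurrences of $\cdot$ and $^\lambda$ in the triple $p,q,r$. If all of $p,q,r$ already lie in $X\cup X^\lambda\cup Q$ there is nothing to prove. Otherwise one of them, say $p$, is of the form $p_1 p_2$ or $p_1^\lambda$ with $p_1,p_2$ shorter. In the first case $[p,q,r]=[p_1 p_2,q,r]=[p_1,q,r][p_2,q,r]$ by \eqref{Eq:AM1}; in the second case $[p,q,r]=[p_1^\lambda,q,r]=[p_1,q,r]^{-1}$ by the derived identity above; either way each factor has strictly fewer operation symbols in its first coordinate, and symmetrically for the second and third coordinates using \eqref{Eq:AM2}, \eqref{Eq:AM3}. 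Since every time we split we pass to a product of polynomials of the target form, the induction closes. I expect no real obstacle here beyond bookkeeping of the $\pm 1$ exponents.

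Part (ii) is the delicate one, and I expect it to be the main obstacle. The polynomial $[p,p^\lambda]$ has the same variable in both slots, so we cannot simply expand $p$ independently in the two coordinates. The key identity is Lemma \ref{Lm:Useful}(vi): $[xy,(xy)^\lambda]=[x,x^\lambda][y,y^\lambda][x,y,y^\lambda][xy,y^\lambda,x^\lambda]^{-1}[y^\lambda,x^\lambda,x]^{-1}[y^\lambda x^\lambda,x,y]$, which expresses $[p_1p_2,(p_1p_2)^\lambda]$ in terms of $[p_1,p_1^\lambda]$, $[p_2,p_2^\lambda]$, and a bounded number of associators whose arguments involve $p_1,p_2,p_1^\lambda,p_2^\lambda$ — each of which can be rewritten by part (i). Dually, for $p=p_1^\lambda$ one uses Lemma \ref{Lm:Useful}(v), $(x^\lambda)^\lambda=x[x,x^\lambda]^{-1}$, so $[p_1^\lambda,(p_1^\lambda)^\lambda]=[p_1^\lambda, p_1[p_1,p_1^\lambda]^{-1}]$; since $[p_1,p_1^\lambda]$ is central this is $[p_1^\lambda,p_1]$ times an associator absorbing the central correction, and $[p_1^\lambda,p_1]=[p_1,p_1^\lambda]^{-1}$ by Lemma \ref{Lm:Useful}(iii). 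Thus in every reduction step the ``$[\cdot,\cdot^\lambda]$ part'' either terminates at a variable (giving $[u,u^\lambda]^{\pm1}$ with $u\in X$) or is pushed down to the subterms, while all newly created material is a product of honest associators handled by (i). A termination measure — for instance the number of $\cdot$ and $^\lambda$ symbols appearing inside the $p$ of the critical factor $[p,p^\lambda]$, which strictly decreases at each step — makes the recursion finite. The one point requiring care is that the substitution instances of identities (v), (vi) are legitimate on $Q$: they hold because $Q$ satisfies \eqref{Eq:AZ1}, \eqref{Eq:AZ2}, \eqref{Eq:AM1}--\eqref{Eq:AM3} and hence $Q/Z(Q)$ is a group, which is exactly the hypothesis of Lemma \ref{Lm:Useful}; I would state this reduction explicitly at the outset so that both (v) and (vi) may be quoted verbatim.
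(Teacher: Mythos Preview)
Your proposal is correct and follows essentially the paper's approach: structural induction using multilinearity \eqref{Eq:AM1}--\eqref{Eq:AM3} for part (i) and Lemma \ref{Lm:Useful}(vi) together with part (i) for the recursion in part (ii). The only minor difference is that for the left-inverse case in (i) you invoke the derived identity $[u^\lambda,v,w]=[u,v,w]^{-1}$ directly (from $[u^\lambda u,v,w]=1$), whereas the paper instead rewrites the argument via Lemma \ref{Lm:Useful}(ii) and Lemma \ref{Lm:LambdaPowers}; your route is slightly more direct but otherwise equivalent.
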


\begin{proof}
Since \eqref{Eq:AZ1} and \eqref{Eq:AZ2} hold, all associators are central and $Q/Z(Q)$ is a group. Then all commutators of the form $[u,u^\lambda]$ are central by Lemma \ref{Lm:Useful}. All central elements can be ignored within commutators and associators. For any polynomial $p$ that is not a constant or a variable, call the operation at the root of $p$ the \emph{root operation}, here either the multiplication or the left inverse. For associators $[p,q,r]$, we apply recursively the identities $\eqref{Eq:AM1}$--$\eqref{Eq:AM3}$ (when the root operation of interest is the multiplication), or Lemma \ref{Lm:Useful}(ii) (when the root operation is the left inverse followed by the multiplication), or Lemma \ref{Lm:LambdaPowers} (when the root operation is an iterated left inverse). For commutators $[p,p^\lambda]$ we proceed similarly, except that we apply Lemma \ref{Lm:Useful}(vi) when the root operation of $p$ is the multiplication.
\end{proof}

\begin{lemma}\label{Lm:FirstRewrite}
Let $Q$ be a loop satisfying the identities $\eqref{Eq:AZ1}$, $\eqref{Eq:AZ2}$ and $\eqref{Eq:AM1}$--$\eqref{Eq:AM3}$ for the standard commutator $c$ and the standard associator $a$. Let $p$ be a polynomial on $Q$ with variables from $X$. Then $p$ is equivalent to a polynomial $rs$, where $r$ is a product of elements from $X\cup X^\lambda\cup Q$, and $s$ is a product of polynomials of the form $[u,v,w]^{\pm 1}$ and $[u,u^\lambda]^{\pm1}$ where $u,v,w\in X\cup X^\lambda\cup Q$.
\end{lemma}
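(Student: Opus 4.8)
The plan is to induct on the structure of the polynomial $p$, pushing all multiplications to the left while collecting the ``error terms'' (associators and commutators of the form $[u,u^\lambda]$) on the right into the factor $s$. First I would reduce to the case where the only operations occurring in $p$ are multiplication, left division $\ldiv$, right division $\rdiv$, and the left inverse $^\lambda$ — but by Lemma~\ref{Lm:Useful}(i) every occurrence of $\ldiv$ and $\rdiv$ can be rewritten, up to equivalence, using only multiplication, left inverse, and associators of the form $[u,v,w]$; since associators are central by \eqref{Eq:AZ1}, \eqref{Eq:AZ2}, such associators can be harvested into $s$. Thus without loss of generality $p$ is built from $X\cup X^\lambda\cup Q$ using only multiplication and the left inverse. (Right inverses $^\rho$ are handled the same way via Lemma~\ref{Lm:Useful}(iv).)

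Next I would argue by induction on the number of operation symbols in $p$. The base case is $p\in X\cup X^\lambda\cup Q$, which is already of the desired form with $s$ empty. For the inductive step there are two cases according to the root operation of $p$. If $p=p_1 p_2$, apply the inductive hypothesis to write $p_i$ equivalent to $r_i s_i$ with $r_i$ a product of elements of $X\cup X^\lambda\cup Q$ and $s_i$ a product of the allowed error terms. Since $s_1,s_2$ lie in $Z(Q)$ (associators are central by \eqref{Eq:AZ1}, \eqref{Eq:AZ2}, and commutators $[u,u^\lambda]$ are central by Lemma~\ref{Lm:Useful}(iii)), central elements reassociate and recommute freely, so $p\equiv (r_1 s_1)(r_2 s_2)\equiv (r_1 r_2)(s_1 s_2)$ up to an associator correction $[r_1,s_1,\ldots]$-type term; more carefully, one moves $s_1$ past $r_2$ using centrality, picking up only central associators, and similarly reorders, so $p\equiv r' s'$ where $r'=r_1 r_2$ is a product of elements of $X\cup X^\lambda\cup Q$ and $s'$ absorbs $s_1$, $s_2$ and the finitely many central corrections. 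If instead the root operation of $p$ is the left inverse, say $p=q^\lambda$ with $q$ equivalent to $rs$ as above, then since $s$ is central, Lemma~\ref{Lm:LambdaPowers}'s reduction $(ab)^\lambda=a^\lambda b^\lambda[\ldots]$ with $b=s$ central gives $p\equiv r^\lambda s^{-1}$ (the inverse of a central element is its two-sided inverse), and then one repeatedly applies Lemma~\ref{Lm:Useful}(ii), (iv), (v) and Lemma~\ref{Lm:LambdaPowers} to rewrite $r^\lambda$ — where $r$ is a product of elements of $X\cup X^\lambda\cup Q$ — as a product of elements of $X\cup X^\lambda\cup Q$ times central associators and central commutators $[u,u^\lambda]$, which again all go into the new $s$.

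The key engine here is really Lemma~\ref{Lm:SimplifyAssoc}: whenever the rewriting produces an associator $[p',q',r']$ or a commutator $[p',(p')^\lambda]$ whose entries are themselves compound polynomials in multiplication and left inverse, part (i) (respectively part (ii)) of that lemma reduces it, up to equivalence, to a product of the elementary error terms $[u,v,w]^{\pm1}$ and $[u,u^\lambda]^{\pm1}$ with $u,v,w\in X\cup X^\lambda\cup Q$. So every correction term generated along the way is immediately normalized and absorbed into $s$. The main obstacle — and the only place requiring genuine care rather than bookkeeping — is the left-inverse case: one must verify that iterating the identities of Lemma~\ref{Lm:Useful}(ii) on $(r_1 r_2\cdots r_m)^\lambda$ terminates and yields only finitely many central error terms, which follows because each application strips off one factor and the resulting associators and commutators have entries that are subproducts or single symbols, hence are handled by Lemma~\ref{Lm:SimplifyAssoc}. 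Collecting everything, $p$ is equivalent to $rs$ of the claimed shape, which completes the induction.
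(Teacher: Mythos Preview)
Your proposal is correct and follows essentially the same strategy as the paper's proof: eliminate the divisions via Lemma~\ref{Lm:Useful}(i) and (iv), push left inverses through products via Lemma~\ref{Lm:Useful}(ii), collapse iterated left inverses via Lemma~\ref{Lm:Useful}(v) and Lemma~\ref{Lm:LambdaPowers}, and finally normalize all the accumulated central error terms via Lemma~\ref{Lm:SimplifyAssoc}. The paper organizes these as four sequential passes over the whole polynomial, whereas you package them into a single structural induction; the content is the same. One small remark: in your multiplication case you speak of ``picking up only central associators'' when moving $s_1$ past $r_2$, but since $s_1,s_2$ are genuinely central (associators by \eqref{Eq:AZ1}--\eqref{Eq:AZ2}, commutators $[u,u^\lambda]$ by Lemma~\ref{Lm:Useful}(iii)), no correction terms arise at all there---the rearrangement $(r_1s_1)(r_2s_2)=(r_1r_2)(s_1s_2)$ is exact.
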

\begin{proof}
We reach the desired expression in four stages. Since $Q/Z(Q)$ is a group, all elements $[a,b,c]$ and $[a,a^\lambda]$ are central. Within this proof, call a polynomial $\emph{ca-free}$ it if contains no commutator/associator polynomials of weight bigger than $1$, and a $\emph{ca-product}$ if it is a product of polynomials of the form $[u,v,w]^{\pm 1}$ and $[u,u^\lambda]^{\pm 1}$ (with $u$, $v$, $w$ not necessarily in $X\cup X^\lambda\cup Q$).

In stage 1, we replace all divisions in the ca-free polynomial $p$ as follows. By Lemma \ref{Lm:Useful} and by multilinearity of associators, we have the identities $x\rdiv y = xy^\lambda[x,y^\lambda,y]^{-1}$ and
\begin{displaymath}
    x\ldiv y = x^\rho y [x,x^\rho,y] = x^\lambda [x,x^\lambda]^{-1} y [x,x^\lambda [x,x^\lambda]^{-1},y] = x^\lambda y [x,x^\lambda]^{-1} [x,x^\lambda,y].
\end{displaymath}
Moving from the leaves to the root of the tree corresponding to $p$, at every node we replace $u\rdiv v$ by $uv^\lambda[u,v^\lambda,v]^{-1}$ and every $u\ldiv v$ by $u^\lambda v [u,u^\lambda]^{-1} [u,u^\lambda,v]$. By moving central elements to the right, we obtain a polynomial $p_1s_1$ equivalent to $p$ which contains no divisions and right inverses, $p_1$ is ca-free and $s_1$ is a ca-product.

In stage 2, we use the identity $(xy)^\lambda = y^\lambda x^\lambda[y^\lambda,x^\lambda,x]^{-1}[y^\lambda x^\lambda,x,y]$ on $p_1$ iteratively to obtain a polynomial $p_2s_2$ equivalent to $p_1$ in which the left inverse is never applied to a product (that is, it appears only in subterms $(((x^\lambda)^\lambda)\cdots)^\lambda$ with $x\in X$), $p_2$ is ca-free and $s_2$ is a ca-product.

In stage 3, we use Lemma \ref{Lm:LambdaPowers} on $p_2$, replacing $(((x^\lambda)^\lambda)\cdots)^\lambda=f^n(x)$ with $x[x,x^\lambda]^{-n/2}$ or with $x^\lambda[x,x^\lambda]^{(n-1)/2}$, depending on the parity of $n$. We obtain a polynomial
$p_3s_3$ equivalent to $p_2$ in which the left inverse operation is never iterated, $p_3$ is ca-free and $s_3$ is a ca-product.

Note that $r=p_3$ is now a product of elements from $X\cup X^\lambda\cup Q$. Moreover, $p$ is equivalent to $rs_3s_2s_1$ and $s_3s_2s_1$ is a ca-product that contains no divisions and right inverses. In stage $4$ we use Lemma \ref{Lm:SimplifyAssoc} to replace $s_3s_2s_1$ with an equivalent polynomial $s$ that is a product of polynomials of the form $[u,v,w]^{\pm 1}$ and $[u,u^\lambda]^{\pm1}$ with $u,v,w\in X\cup X^\lambda\cup Q$, finishing the proof.
\end{proof}

We now establish the analog of Lemma \ref{Lm:Reorder} for $3$-supernilpotent loops, paying attention also to the way in which products can be associated.

\begin{lemma}\label{Lm:LoopReorder}
Let $Q$ be a loop satisfying the identities $\mathcal I_{c,a}$ for the standard commutator term $c$ and the standard associator term $a$. Let $p$ be a polynomial on $Q$ with support $X$. Then $p$ is equivalent to a product of complex commutator/associator polynomials in $X\cup X^\lambda\cup Q$ such that the weight of every factor is at most $3$ and all factors with the same support form a subpolynomial of the product.
\end{lemma}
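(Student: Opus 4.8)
The plan is to establish the loop analog of Lemma~\ref{Lm:Reorder} in four stages: extract the structural consequences of $\mathcal I_{c,a}$ that we need, show that re-parenthesizing products is harmless, run a bubble sort governed by a norm, and read off the conclusion.

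\emph{Stage 1: weight analysis.} First I would record what $\mathcal I_{c,a}$ tells us about complex commutator/associator polynomials evaluated on $Q$. By \eqref{Eq:CN} every value of a term $\comm{\cdot,\cdot}$ lies in $\nuc Q$; by \eqref{Eq:AZ1} and \eqref{Eq:AZ2} every associator lies in $Z(Q)$, so $Q/Z(Q)$ is a group and the hypotheses of Lemmas~\ref{Lm:Useful}, \ref{Lm:LambdaPowers} and \ref{Lm:FirstRewrite} hold. A short case analysis then gives that every weight-$3$ complex commutator/associator polynomial takes values in $Z(Q)$: the associators by the above, and the commutators $\comm{a,\comm{b,c}}$ and $\comm{\comm{b,c},a}$ are central by \eqref{Eq:CCCtrivial1} and \eqref{Eq:CCCtrivial2} (they commute with everything) together with \eqref{Eq:CN} (they lie in $\nuc Q$). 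A strong induction on weight, using all of \eqref{Eq:AZ1}--\eqref{Eq:CCcommute}, then shows that every complex commutator/associator polynomial of weight at least $4$ is equivalent to $1$: in a weight-$n$ polynomial $\assoc{t_1,t_2,t_3}$ some $t_i$ has weight at least $2$, hence is headed by a commutator, or by an associator, or (by induction) equivalent to $1$, so the outer associator vanishes by \eqref{Eq:CN}, by \eqref{Eq:AZ2}, or by absorption; in a weight-$n$ polynomial $\comm{t_1,t_2}$, if some $t_i$ has weight at least $4$ use induction, if some $t_i$ has weight $3$ it is central so the commutator vanishes, and otherwise $t_1,t_2$ both have weight $2$ and \eqref{Eq:CCcommute} applies. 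In summary: every complex commutator/associator polynomial of weight at least $2$ takes values in $\nuc Q$, every one of weight at least $3$ takes values in $Z(Q)$, and factors of weight at least $4$ may be discarded.

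\emph{Stage 2: non-associativity is harmless.} Using the multilinearity identities \eqref{Eq:AM1}--\eqref{Eq:AM3} to split product-arguments into their weight-$1$ constituents, and \eqref{Eq:CN} and \eqref{Eq:AZ2} to kill any associator one of whose arguments is headed by a commutator or by an associator, one sees that any associator $\assoc{\alpha,\beta,\gamma}$ whose three arguments are products of complex commutator/associator polynomials over $X\cup X^\lambda\cup Q$ is equivalent to a product of associators $\assoc{u,v,w}$ with $u,v,w\in X\cup X^\lambda\cup Q$, all of which are central of weight $3$. Hence re-parenthesizing a product of complex commutator/associator polynomials over $X\cup X^\lambda\cup Q$ changes its value only by a product of such central weight-$3$ associators, which, being central, may then be placed anywhere.

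\emph{Stage 3: bubble sort.} Start from the expression $rs$ produced by Lemma~\ref{Lm:FirstRewrite}, where $r$ is a product of elements of $X\cup X^\lambda\cup Q$ and $s$ is a product of factors $\assoc{u,v,w}^{\pm1}$ and $\comm{u,u^\lambda}^{\pm1}$, all central. As in Section~\ref{sec:groups} assign to a complex commutator/associator polynomial $t$ the norm $N(t)=(\supp(t),\weight(t))$ ordered by $\lenorm$; since weight-$\ge 4$ factors have been discarded only finitely many norms occur, and the disorder $d(\cdot)$ of a product of factors is an element of $(\mathbb N^m,\lelex)$. As long as the disorder is nonzero there are consecutive factors $u$ (to the left) and $v$ with $N(u)>N(v)$; I would re-parenthesize (Stage 2, shedding central weight-$3$ associators) so that $uv$ is a subterm, then rewrite $uv=(vu)\comm{u,v}$. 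Should $\comm{u,v}$ not be equivalent to $1$ we have $\weight(u)+\weight(v)\le 3$, so $\comm{u,v}$ is either a weight-$2$ factor $\comm{u,v}$ with $u,v\in X\cup X^\lambda\cup Q$ (lying in $\nuc Q$) or a central weight-$3$ complex commutator over $X\cup X^\lambda\cup Q$; in either case $N(\comm{u,v})>N(u)=\max\{N(u),N(v)\}$, and every nontrivial shed associator also has all arguments of weight $1$, hence norm of the form $(\,\cdot\,,3)>N(u)$. Exactly as in the proof of Lemma~\ref{Lm:Reorder}, the $(N(u),N(v))$-entry of the disorder drops by $1$ while only strictly $\lenorm$-larger entries can increase, so each step yields an equivalent product with strictly smaller disorder.

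\emph{Stage 4: conclusion.} Since $(\mathbb N^m,\lelex)$ has no infinite descending chain, the process halts with disorder $(0,0,\dots)$: the factors, read left to right, have nondecreasing norms, so factors of equal support occur consecutively. Parenthesizing so that each maximal block of equal-support factors forms a subterm (left-associating within blocks and across blocks, say) yields the required product of complex commutator/associator polynomials over $X\cup X^\lambda\cup Q$ of weight at most $3$ in which all equal-support factors form a subpolynomial. The main obstacle is Stage 2 and its interplay with Stage 3: in a loop one cannot literally transpose two adjacent factors, every transposition forces a re-association, and one must check that the associators produced — together with the commutators — are central (or at least in the nucleus) and strictly increase the norm, so that the bubble-sort measure genuinely decreases; this is where all the identities of $\mathcal I_{c,a}$ are used.
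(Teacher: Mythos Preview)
Your overall strategy coincides with the paper's: start from the decomposition $rs$ of Lemma~\ref{Lm:FirstRewrite}, use that associators are central and commutators nuclear to reduce the sorting of $r$ to the $3$-nilpotent group situation of Lemma~\ref{Lm:Reorder}, and then group factors by support. There is, however, a gap in Stage~3. You assert that every nontrivial shed associator has norm of the form $(\,\cdot\,,3)>N(u)$, but $\lenorm$ compares supports first and weights second: a shed associator $[a,b,c]$ with $a,b,c\in X\cup X^\lambda\cup Q$ has support $\supp(a)\cup\supp(b)\cup\supp(c)$, and nothing in your argument forces this set to dominate $\supp(u)$ in the chosen linear extension of inclusion. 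For instance, re-bracketing $(t_1t_2)(t_3t_4)$ to $((t_1t_2)t_3)t_4$ on the way to isolating the pair $t_2,t_3$ sheds, after multilinear splitting, the piece $[t_1,t_3,t_4]$, whose support need not contain $\supp(t_2)$. When a shed associator has norm below $N(u)$, the disorder can increase at an index lexicographically below $(N(u),N(v))$, and your descent argument breaks.

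The paper avoids this by never feeding associators into the disorder at all. Every associator that appears---whether from the original $s$ or shed during re-bracketing---is central and is immediately pushed into the growing central pile $s$; the bubble sort then runs purely on the commutator side, which is literally the computation in a $3$-nilpotent group, so Lemma~\ref{Lm:Reorder} applies as a black box. At the end the central pile is redistributed to the appropriate support blocks for free. Your proof is repaired by the same move: in Stage~3, send each shed associator (and each weight-$3$ complex commutator, also central by your Stage~1) straight into $s$ rather than tracking it as a factor in the disorder.
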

\begin{proof}
We can assume $p=rs$ as in Lemma \ref{Lm:FirstRewrite}. Since all associators are central and multilinear, we can reassociate the factors of $r$ at will, collecting the associators that arise in $s$. Once some commutators arise in $r$, we can ignore them in associators thanks to the identity \eqref{Eq:CN}. The combined effect of the identities \eqref{Eq:CCCtrivial1}--\eqref{Eq:CCcommute} is that all complex commutators of weight $4$ or higher vanish, and we can thus ignore the corresponding complex commutator polynomials since we work up to equivalence. We can therefore think of rewriting $r$ as if it were a product of elements in a $3$-nilpotent group. By Lemma \ref{Lm:Reorder}, we can rewrite $r$ as a product of complex commutators with nondecreasing norm and weight at most $3$. In particular, all factors of $r$ with the same support will be automatically ``adjacent'', forming a subpolynomial of $r$. Finally, the central factors comprising $s$ can be moved to their desired position for free.
\end{proof}

Here is the main result:

\begin{theorem}\label{Th:MainLoops}
The following conditions are equivalent for a loop $Q$:
\begin{enumerate}
\item[(i)] $Q$ is $3$-supernilpotent,
\item[(ii)] $Q$ satisfies the identities $\mathcal I_{c,a}$ with any choice of commutator term $c$ and associator term $a$,
\item[(iii)] $Q$ satisfies the identities $\mathcal I_{c,a}$ with the standard commutator term $c$ and the standard associator term $a$.
\end{enumerate}
\end{theorem}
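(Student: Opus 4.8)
The plan is to prove the cycle of implications (i) $\Rightarrow$ (ii) $\Rightarrow$ (iii) $\Rightarrow$ (i). Here (i) $\Rightarrow$ (ii) is exactly Proposition \ref{Pr:3snImpliesI}, and (ii) $\Rightarrow$ (iii) is immediate, since the standard commutator term and the standard associator term are themselves a commutator term and an associator term. All the substance therefore lies in (iii) $\Rightarrow$ (i), which I would model closely on the proof of Theorem \ref{Th:MainGroups}: reduce an arbitrary absorbing polynomial to a normal form and then feed that form to Proposition \ref{Pr:MainTechnical}. The heavy lifting has, in effect, already been done in the rewriting Lemmas \ref{Lm:SimplifyAssoc}, \ref{Lm:FirstRewrite} and \ref{Lm:LoopReorder}, so the argument sketched below is short; the genuine obstacle is the commutator--associator calculus inside those lemmas, not the final reasoning.

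So suppose $Q$ satisfies $\mathcal I_{c,a}$ for the standard commutator term $c$ and the standard associator term $a$, and let $p$ be an absorbing polynomial on $Q$ with $\supp(p) = X = \{x_1,\dots,x_n\}$ and $n \ge 4$; we must show that $p$ is constant. First I would apply Lemma \ref{Lm:LoopReorder} to rewrite $p$, up to equivalence, as a product of complex commutator/associator polynomials in $X \cup X^\lambda \cup Q$ in which every factor has weight at most $3$ and all factors with a common support form a subpolynomial of the product. Since the support of a complex commutator/associator polynomial has cardinality at most its weight, every factor has support of size at most $3 < n$, hence a proper subset of $X$. For each proper subset $S$ of $\{1,\dots,n\}$ let $p_S$ be the subpolynomial formed by the (already grouped) factors whose support equals $\{x_i : i \in S\}$; then $p$ is equivalent to $\prod_S p_S$, a product over a set of proper subsets of $\{1,\dots,n\}$ in some order and with some parenthesizing.

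It then remains to check the hypotheses of Proposition \ref{Pr:MainTechnical}, namely that each $p_S$ is absorbing at $(1 : i \in S)$ into $1$. When $S \ne \emptyset$, every factor of $p_S$ is a complex commutator/associator polynomial with nonempty support; a straightforward induction on the weight, using the absorption identities \eqref{Eq:CAabsorbing} of the standard commutator and associator at the inductive steps, shows that such a polynomial is absorbing, and a product of absorbing polynomials is absorbing, so $p_S$ is absorbing. When $S = \emptyset$, the block $p_\emptyset$ is a product of complex commutator/associator polynomials of empty support, hence a constant; substituting the all-ones tuple into $p \equiv \prod_S p_S$ and using that $p$ is absorbing gives $p_\emptyset = p(1,\dots,1) = 1$, so $p_\emptyset$ is absorbing as well. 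Proposition \ref{Pr:MainTechnical} now applies and yields that $p$ is constant, whence $Q$ is $3$-supernilpotent. The only points demanding care in this last step are that weight $\le 3$ together with $n \ge 4$ genuinely forces every factor to have proper support, and that Lemma \ref{Lm:LoopReorder} really does let us split the product along supports into a product $\prod_S p_S$ of the shape required by Proposition \ref{Pr:MainTechnical}.
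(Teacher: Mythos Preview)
Your proposal is correct and follows essentially the same route as the paper's proof: the cycle (i) $\Rightarrow$ (ii) $\Rightarrow$ (iii) $\Rightarrow$ (i), with the substantive implication (iii) $\Rightarrow$ (i) obtained by applying Lemma \ref{Lm:LoopReorder} to an absorbing polynomial, grouping the resulting weight-$\le 3$ factors by support into blocks $p_S$, checking each $p_S$ is absorbing, and invoking Proposition \ref{Pr:MainTechnical}. The paper carries this out only for $|X|=4$ (which suffices, since an absorbing polynomial in $n>4$ variables restricts to an absorbing polynomial in $4$ variables upon fixing the extra coordinates), whereas you treat all $n\ge 4$ directly; this is a cosmetic difference, not a different argument.
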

\begin{proof}
By Proposition \ref{Pr:3snImpliesI}, every $3$-supernilpotent loop satisfies the identities $\mathcal I_{c,a}$, so (i) implies (ii). Clearly, (ii) implies (iii). For the rest of the proof suppose that $Q$ is a loop satisfying $\mathcal I_{c,a}$ with the standard commutator term $c$ and the standard associator term $a$.

Let $p$ be an absorbing polynomial on $Q$ with support $X=\{x_1,\dots,x_4\}$. Up to equivalence, we can assume by Lemma \ref{Lm:LoopReorder} that $p$ is a product of complex commutator/associator polynomials in $X\cup X^\lambda\cup Q$ of weight at most $3$ such that all factors with the same support form a subpolynomial of $p$. We can therefore write $p=\prod_S p_S$, where the product ranges over proper subsets of $\{1,2,3,4\}$ in some order and where every factor of $p_S$ is a complex commutator/associator polynomial with support $\{x_i:i\in S\}$.

We claim that every $p_S$ is absorbing on $Q$. If $S\ne\emptyset$, every factor of $p_S$ is a complex commutator/associator polynomial containing a variable, hence absorbing. It follows that $1 = p(1,\dots,1)= p_\emptyset$ since $p$ is absorbing, and $p_\emptyset$ is therefore absorbing as well.

All assumptions of Proposition \ref{Pr:MainTechnical} are now satisfied and hence $p$ is constant, proving that $Q$ is $3$-supernilpotent.
\end{proof}

\section{Consequences of and connections to 3-supernilpotence}\label{Sc:Consequences}

\subsection{Inner mappings in 3-supernilpotent loops}\label{Ss:Inner}

For a loop $Q$, let $\mlt{Q}=\langle L_a,R_a:a\in Q\rangle$ be the \emph{multiplication group} of $Q$ and $\inn{Q}=\{f\in\mlt{Q}:f(1)=1\}$ the \emph{inner mapping group} of $Q$. Consider the inner mappings $L_{a,b}$, $R_{a,b}$, $M_{a,b}$, $T_a$ and $T_{a,b}$ defined by
\begin{align*}
    L_{a,b}(u) &= (ab)\ldiv (a(bu)),\\
    R_{a,b}(u) &= ((ua)b)\rdiv (ab),\\
    M_{a,b}(u) &= a\ldiv((a(ub))/b),\\
    T_a(u) &= a\ldiv (ua),\\
    T_{a,b}(u) &= [T_a,T_b](u) = T_a^{-1}T_b^{-1}T_aT_b(u).
\end{align*}
It is well-known that $\inn{Q}=\langle L_{a,b},R_{a,b},T_a:a,b\in Q\rangle$ and it is easy to check that $c\in\nuc{Q}$ if and only if $L_{a,b}(c)=R_{a,b}(c)=M_{a,b}(c)=c$ for every $a,b\in Q$.

Analogous notation will be used also for terms. For instance, $L_{x,y}(z)$ denotes the term $(xy)\ldiv (x(yz))$.

Note that $U_{a,b}(1)=1$ for all $U\in\{L,R,M,T\}$ and all $a,b\in Q$ since $U_{a,b}\in\inn{Q}$. Moreover, $T_1=1$ and $U_{a,b}=1$ whenever $U\in\{L,R,M,T\}$ and $a=1$ or $b=1$. We will use these facts in the proof of the next result.

\begin{proposition}\label{Pr:Inn}
Let $Q$ be a $3$-supernilpotent loop. Then:
\begin{enumerate}
\item[(i)] $L_{a,b}$, $R_{a,b}$, $M_{a,b}$ and $T_{a,b}$ are automorphisms of $Q$ for every $a,b\in Q$.
\item[(ii)] $L_{a,b}$, $R_{a,b}$, $M_{a,b}$ and $T_{a,b}$ are in the center of $\inn Q$ for every $a,b\in Q$.
\item[(iii)] $T_a(bc) =T_a(b)T_a(c)$ if $a\in Q$ and at least one of $b,c$ is a commutator or an associator in $Q$.
\end{enumerate}
\end{proposition}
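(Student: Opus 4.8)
The plan is to reduce each of the three parts to the recognition of a suitable constant-free absorbing polynomial in at least four variables, which by $3$-supernilpotence must be constant, hence equal to $1$. Throughout we use the facts recorded just before the statement: every inner mapping fixes $1$; $T_1=1$, and $U_{a,b}=1$ whenever $U\in\{L,R,M\}$ and $a=1$ or $b=1$. We also use the standard fact that $L_{a,b}$, $R_{a,b}$, $M_{a,b}$, $T_a$, $T_{a,b}$ and their inverses are all term operations of $Q$, so that the expressions written below are genuine polynomials on $Q$.

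For part (i), fix $U\in\{L,R,M\}$ and consider
\[
    q(x_1,x_2,y_1,y_2)=U_{x_1,x_2}(y_1y_2)\rdiv\bigl(U_{x_1,x_2}(y_1)\cdot U_{x_1,x_2}(y_2)\bigr),
\]
a polynomial with support $\{x_1,x_2,y_1,y_2\}$. If $x_1=1$ or $x_2=1$, then $U_{x_1,x_2}$ is the identity map and $q$ collapses to $(y_1y_2)\rdiv(y_1y_2)=1$; if $y_1=1$ or $y_2=1$, then $U_{x_1,x_2}(1)=1$ makes $q$ collapse to $1$ as well. Thus $q$ is absorbing in four variables, hence constant and equal to $q(1,1,1,1)=1$, which says precisely that $U_{a,b}$ is an automorphism. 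The same term with $T_{x_1,x_2}$ in place of $U_{x_1,x_2}$ works, since $T_{1,x_2}=[T_1,T_{x_2}]=1=T_{x_1,1}$; this settles $T_{a,b}$. (Note that the analogous term for $T_a$ alone has only three variables, which is exactly why $T_a$ is not asserted to be an automorphism.)

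Part (iii) uses the same mechanism, except that the extra variables needed to cross the threshold of four come from a commutator or associator: the term $T_x(z_1z_2)\rdiv(T_x(z_1)\,T_x(z_2))$ has only three variables, but substituting a commutator $[u,v]$ or an associator $[u,v,w]$ for $z_1$ (or for $z_2$) raises the variable count while preserving absorption, because commutator and associator terms are themselves absorbing. For example, in the case that the right factor is a commutator we take
\[
    q(x,z,u,v)=T_x\bigl(z\cdot[u,v]\bigr)\rdiv\bigl(T_x(z)\cdot T_x([u,v])\bigr),
\]
which collapses to $1$ when $x=1$ (via $T_1=1$), when $z=1$ (via $T_x(1)=1$), and when $u=1$ or $v=1$ (since then $[u,v]=1$). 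The three remaining cases---left factor a commutator, and either factor an associator (five variables)---are handled identically.

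For part (ii), recall that $\inn Q=\langle L_{c,d},R_{c,d},T_c:c,d\in Q\rangle$, so it suffices to show that each map $W\in\{L_{a,b},R_{a,b},M_{a,b},T_{a,b}\}$---itself an inner mapping---commutes with every such generator; this places $W$ in $Z(\inn Q)$. For a generator $V_{y_1,y_2}\in\{L_{y_1,y_2},R_{y_1,y_2}\}$ the polynomial $[W_{x_1,x_2},V_{y_1,y_2}](z)\rdiv z$ in the five variables $x_1,x_2,y_1,y_2,z$ collapses to $z\rdiv z=1$ whenever one of $x_1,x_2,y_1,y_2$ is $1$ (then one of the two maps, and hence the commutator, is the identity) and whenever $z=1$ (since inner mappings fix $1$); being absorbing in five variables it is constant and equal to $1$, so $W_{a,b}$ and $V_{c,d}$ commute. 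For the generator $T_y$ the analogous polynomial $[W_{x_1,x_2},T_y](z)\rdiv z$ has four variables and the same argument applies via $T_1=1$. The computations are routine throughout; the only thing requiring care is the bookkeeping ensuring that setting any single variable to $1$ trivializes the term and that every support produced has size at least four---which is exactly what $3$-supernilpotence requires.
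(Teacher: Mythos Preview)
Your proof is correct and follows essentially the same approach as the paper: each assertion is reduced to showing that a suitable absorbing polynomial in at least four variables is constant. The only cosmetic difference is that for part (ii) the paper checks $U_{a,b}T_c(d)=T_cU_{a,b}(d)$ directly (and similarly for $L_{c,d}$, $R_{c,d}$) rather than writing the permutation commutator $[W_{x_1,x_2},V](z)\rdiv z$, but the underlying idea and the verification that one substitution $=1$ trivializes the term are identical.
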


\begin{proof}
Let $U\in\{L,R,M,T\}$. For (i), note that $U_{a,b}(cd)=U_{a,b}(c)U_{a,b}(d)$ whenever one of $a,b,c,d$ is equal to $1$. By $3$-supernilpotence, $U_{a,b}(cd)=U_{a,b}(c)U_{a,b}(d)$ holds for all $a,b,c,d\in Q$. For (ii), it suffices to show that $U_{a,b}$ commutes with $T_c$, $L_{c,d}$ and $R_{c,d}$. We have $U_{a,b}T_c(d)=T_cU_{a,b}(d)$ whenever one of $a,b,c,d$ is equal to $1$, and similarly for $L_{c,d}$ and $R_{c,d}$. For (iii), $T_a(b\comm{c,d}) = T_a(b)T_a(\comm{c,d})$ whenever one of $a,b,c,d$ is equal to $1$. Similarly for $T_a(\comm{c,d}b) = T_a(\comm{c,d})T_a(b)$, $T_a(b\assoc{c,d,e}) = T_a(b)T_a(\assoc{c,d,e})$ and $T_a(\assoc{c,d,e}b) = T_a(\assoc{c,d,e})T_a(b)$.
\end{proof}

\begin{corollary}\label{Cr:Inn}
Let $Q$ be a 3-supernilpotent loop. Then
\begin{displaymath}
    \langle L_{a,b},M_{a,b},R_{a,b},T_{a,b}: a,b\in Q\rangle\leq Z(\inn Q)\cap\aut Q.
\end{displaymath}
\end{corollary}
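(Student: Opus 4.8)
The plan is to deduce Corollary \ref{Cr:Inn} directly from Proposition \ref{Pr:Inn}, so that the ``proof'' amounts to recording which part of the proposition supplies which inclusion. First I would observe that the subgroup $\langle L_{a,b},M_{a,b},R_{a,b},T_{a,b}:a,b\in Q\rangle$ is generated by a set $\mathcal G$ of inner mappings, so to show it lies in $Z(\inn Q)\cap\aut Q$ it suffices to show that each generator lies in $Z(\inn Q)\cap\aut Q$; indeed, $\aut Q$ is a subgroup of $\mathrm{Sym}(Q)$ and $Z(\inn Q)$ is a subgroup of $\inn Q$, so their intersection is a subgroup, and any subgroup containing a generating set contains the generated subgroup.

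Next I would verify membership of each generator. Proposition \ref{Pr:Inn}(i) gives that $L_{a,b}$, $R_{a,b}$, $M_{a,b}$ and $T_{a,b}$ are automorphisms of $Q$ for all $a,b\in Q$, so $\mathcal G\subseteq\aut Q$; and each of these maps is an inner mapping (for $L_{a,b}$, $R_{a,b}$, $T_a$ by definition, for $M_{a,b}$ by the standard fact recalled before the statement, and for $T_{a,b}=[T_a,T_b]$ as a commutator of elements of $\inn Q$), so $\mathcal G\subseteq\inn Q$. Proposition \ref{Pr:Inn}(ii) then gives $\mathcal G\subseteq Z(\inn Q)$. Combining, $\mathcal G\subseteq Z(\inn Q)\cap\aut Q$, and hence $\langle\mathcal G\rangle\leq Z(\inn Q)\cap\aut Q$.

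There is essentially no obstacle here: the corollary is a pure bookkeeping consequence of the proposition, and the only point deserving a sentence is the elementary subgroup-closure argument that a generated subgroup is contained in any subgroup containing the generators. One could also remark that $Z(\inn Q)\cap\aut Q$ is itself a normal subgroup of $\inn Q$ (being the intersection of the centre with the normal subgroup $\aut Q\cap\inn Q$), which makes the containment of the generated subgroup especially transparent, but this is not needed.

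\begin{proof}
The set $\mathcal G=\{L_{a,b},M_{a,b},R_{a,b},T_{a,b}:a,b\in Q\}$ consists of inner mappings: $L_{a,b}$, $R_{a,b}$ and $T_a$ lie in $\inn Q$ by definition, $M_{a,b}\in\inn Q$ as recalled above, and $T_{a,b}=[T_a,T_b]\in\inn Q$ as a commutator of elements of $\inn Q$. By Proposition \ref{Pr:Inn}(i), every element of $\mathcal G$ is an automorphism of $Q$, so $\mathcal G\subseteq\aut Q$. By Proposition \ref{Pr:Inn}(ii), every element of $\mathcal G$ lies in $Z(\inn Q)$. Hence $\mathcal G\subseteq Z(\inn Q)\cap\aut Q$. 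Since $Z(\inn Q)$ is a subgroup of $\inn Q$ and $\aut Q$ is a subgroup of $\mathrm{Sym}(Q)$, the intersection $Z(\inn Q)\cap\aut Q$ is a subgroup of $\inn Q$, and as it contains the generating set $\mathcal G$ it contains the subgroup $\langle\mathcal G\rangle=\langle L_{a,b},M_{a,b},R_{a,b},T_{a,b}:a,b\in Q\rangle$ as well.
\end{proof}
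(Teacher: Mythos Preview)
Your proposal is correct and matches the paper's approach: the corollary is stated without proof precisely because it is an immediate consequence of Proposition~\ref{Pr:Inn}(i)--(ii), together with the elementary fact that $Z(\inn Q)\cap\aut Q$ is a subgroup containing the generating set. Your write-up simply spells out this bookkeeping, which is exactly what the paper leaves implicit.
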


\begin{remark}
A loop $Q$ is said to be \emph{automorphic} (or an \emph{A-loop}) if $\inn{Q}\le\aut{Q}$ \cite{BP, KKPV}. A loop $Q$ is said to be an \emph{AIM} loop if $\inn{Q}$ is an abelian group \cite{KVV}. Corollary \ref{Cr:Inn} makes it clear that $3$-supernilpotent loops are very close to both automorphic loops and AIM loops. In more detail, a $3$-supernilpotent loop $Q$ is automorphic if and only if $T_a\in\aut{Q}$ for every $a\in Q$. A $3$-supernilpotent loop $Q$ is an AIM loop if and only if $T_{a,b}=1$ for every $a,b\in Q$.
\end{remark}

\subsection{2-nilpotent Moufang loops}\label{Ss:Moufang}

Note that the identities \eqref{Eq:AZ1}--\eqref{Eq:CCcommute} are satisfied in every $2$-nilpotent loop. Theorem \ref{Th:MainLoops} therefore implies:

\begin{corollary}\label{Cr:2nilp}
Let $Q$ be a $2$-nilpotent loop. Then $Q$ is $3$-supernilpotent if and only if the standard associator term is multilinear.
\end{corollary}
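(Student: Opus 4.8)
The plan is to invoke Theorem~\ref{Th:MainLoops} and reduce the statement to the single observation that, for the standard commutator and associator terms, the identities \eqref{Eq:AZ1}--\eqref{Eq:CCcommute} hold automatically in every $2$-nilpotent loop. Granting this, satisfying $\mathcal I_{c,a}$ for the standard terms amounts to satisfying the three multilinearity identities \eqref{Eq:AM1}--\eqref{Eq:AM3} alone, and the equivalence of conditions (i) and (iii) in Theorem~\ref{Th:MainLoops} then yields exactly the asserted equivalence between $3$-supernilpotence and multilinearity of the standard associator term.

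The core of the argument is that in a $2$-nilpotent loop $Q$ the quotient $Q/Z(Q)$ is a commutative group, so for all $a,b,c\in Q$ both the standard commutator $[a,b]=(ba)\ldiv(ab)$ and the standard associator $[a,b,c]=(a(bc))\ldiv((ab)c)$ lie in $Z(Q)$. Since a central element commutes and associates with everything, it may be deleted from inside any standard commutator or associator. I would then go through \eqref{Eq:AZ1}--\eqref{Eq:CCcommute} one by one: in \eqref{Eq:AZ1} the inner argument $\assoc{y,u,v}$ is central, so the commutator is trivial; in \eqref{Eq:AZ2} and \eqref{Eq:CN} the distinguished argument (an associator, resp.\ a commutator) is central, hence in $\nuc Q$, so the outer associator is trivial; in \eqref{Eq:CCCtrivial1}--\eqref{Eq:CCCtrivial2} the innermost $\comm{u,v}$ is central, so already $\comm{y,\comm{u,v}}=\comm{\comm{u,v},y}=1$, whence the weight-$4$ commutators vanish; and in \eqref{Eq:CCcommute} both $\comm{x,y}$ and $\comm{u,v}$ are central, so they commute. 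Thus each instance of \eqref{Eq:AZ1}--\eqref{Eq:CCcommute} reduces to $1=1$ in $Q$.

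It remains to assemble the two implications. If the standard associator term is multilinear on $Q$, then $Q$ satisfies \eqref{Eq:AM1}--\eqref{Eq:AM3}, and together with the identities \eqref{Eq:AZ1}--\eqref{Eq:CCcommute} verified above, $Q$ satisfies all of $\mathcal I_{c,a}$ for the standard terms, so $Q$ is $3$-supernilpotent by Theorem~\ref{Th:MainLoops}. Conversely, if $Q$ is $3$-supernilpotent, Theorem~\ref{Th:MainLoops} gives that $Q$ satisfies $\mathcal I_{c,a}$ for the standard terms, in particular \eqref{Eq:AM1}--\eqref{Eq:AM3}, i.e.\ the standard associator term is multilinear. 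I do not anticipate a genuine obstacle: the only substantive point is the centrality of standard commutators and associators in $2$-nilpotent loops, after which checking \eqref{Eq:AZ1}--\eqref{Eq:CCcommute} is immediate since central elements may be discarded from commutators and associators.
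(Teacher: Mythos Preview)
Your proposal is correct and follows exactly the paper's approach: the paper simply remarks that the identities \eqref{Eq:AZ1}--\eqref{Eq:CCcommute} hold in every $2$-nilpotent loop and then invokes Theorem~\ref{Th:MainLoops}, which is precisely what you do, only with the verification of \eqref{Eq:AZ1}--\eqref{Eq:CCcommute} spelled out in more detail.
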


A loop is \emph{Moufang} if it satisfies the identity $x(y(xz)) = ((xy)x)z$. By Moufang's Theorem \cite{Moufang}, Moufang loops are \emph{diassociative}, that is, every two elements generate an associative subloop.

The following corollary of Theorem \ref{Th:MainLoops} was suggested to us by Michael Kinyon.

\begin{corollary}\label{Cr:Moufang}
2-nilpotent Moufang loops are 3-supernilpotent.
\end{corollary}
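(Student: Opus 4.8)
The plan is to reduce everything to Corollary~\ref{Cr:2nilp}. A $2$-nilpotent Moufang loop $Q$ is in particular a $2$-nilpotent loop, so by that corollary it is $3$-supernilpotent if and only if its standard associator term $[x,y,z]$ is multilinear, i.e.\ if and only if $Q$ satisfies \eqref{Eq:AM1}--\eqref{Eq:AM3}. Thus the entire task is to prove that $[x,y,z]$ is multilinear on $Q$. I would exploit two structural features throughout. First, $2$-nilpotence forces $Q/Z(Q)$ to be an abelian group, so every associator (and every commutator) of $Q$ lies in $Z(Q)$; in particular $[x,y,z]$ takes values in $Z(Q)$, associators may be slid past any product, and Lemma~\ref{Lm:CACalculus} applies. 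Second, by Moufang's theorem \cite{Moufang}, $Q$ is diassociative, hence flexible, $(ab)a=a(ba)$, and satisfies $[a,b,c]=1$ whenever two of $a,b,c$ lie in a common $2$-generated subloop --- in particular whenever two of $a,b,c$ coincide.

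Next I would carry out a commutator--associator computation producing \eqref{Eq:AM1}--\eqref{Eq:AM3}. The first ingredient is a ``cocycle'' identity valid in every $2$-nilpotent loop: expanding $((xy)u)v$ in two different bracketings and collecting the (central) associators gives a relation of the form $[xy,u,v]\,[x,y,uv]=[y,u,v]\,[x,yu,v]\,[x,y,u]$ in the abelian group $Z(Q)$. By itself this does \emph{not} imply multilinearity. The Moufang hypothesis enters precisely at this point: expanding the left, right and middle Moufang identities in the same way --- for instance $x(y(xz))=((xy)x)z$, which by flexibility reads $x(y(xz))=(x(yx))z$ --- produces extra relations among the central associators, such as $[x,yx,z]=[y,x,z]^{-1}$; substituting these back into the cocycle identity collapses it to the three linearity laws. \textbf{This computation is the main obstacle of the proof:} it is the only place where the Moufang axioms are used, and keeping track of the positions and exponents of the many central associators is delicate. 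A less computational alternative is to quote known identities for associators in Moufang loops (cf.\ \cite{Br-book}), which express $[ab,c,d]$, $[a,bc,d]$, $[a,b,cd]$ as $[a,c,d][b,c,d]$, $[a,b,d][a,c,d]$, $[a,b,c][a,b,d]$ up to an inner automorphism; as all associators of $Q$ are central, those inner automorphisms act trivially and one lands exactly on \eqref{Eq:AM1}--\eqref{Eq:AM3}.

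Having verified \eqref{Eq:AM1}--\eqref{Eq:AM3}, Corollary~\ref{Cr:2nilp} yields at once that $Q$ is $3$-supernilpotent, which finishes the argument. As a side benefit, this also re-proves the assertion from the introduction that code loops --- being $2$-nilpotent Moufang loops --- are $3$-supernilpotent, and combined with diassociativity it shows that the associator of $Q$ is in fact an alternating multilinear map into $Z(Q)$.
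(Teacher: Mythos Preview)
Your reduction via Corollary~\ref{Cr:2nilp} to the multilinearity of the standard associator is exactly the paper's first move, and your observation that centrality of associators lets you cancel freely is also used there. Where you diverge is in \emph{how} you obtain \eqref{Eq:AM1}--\eqref{Eq:AM3}. You propose either (a) a direct associator calculus (the pentagon/cocycle identity plus identities squeezed out of the Moufang laws), or (b) citing Bruck for ``linearity up to an inner mapping.'' Route (a) is plausible, but as you yourself flag, the key computation is not carried out: the single relation $[x,yx,z]=[y,x,z]^{-1}$ does not by itself collapse the cocycle to linearity, and you would still need the skew/cyclic symmetries of the Moufang associator to finish. Route (b) is closer to what actually happens, but the specific formulation (``$[ab,c,d]=[a,c,d][b,c,d]$ up to an inner automorphism'') is not a lemma one can point to in \cite{Br-book}; you would need to locate or derive the precise statement.

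The paper's argument is shorter and avoids the brute-force calculus. From diassociativity and centrality of associators one gets $L_{a,b}(c)=c[a,b,c]^{-1}$. Then \cite[Lemma VII.2.2]{Br-book} says that in any Moufang loop $L_{a,b}$ is a right pseudo-automorphism with companion $[b,a]$; since $[b,a]\in Z(Q)$ here, $L_{a,b}$ is an honest automorphism. Comparing $L_{a,b}(cd)=L_{a,b}(c)L_{a,b}(d)$ with the formula above immediately gives linearity of $[a,b,\,\cdot\,]$ in the third slot. The remaining two slots come for free from the cyclic symmetry $[x,y,z]=[y,z,x]$, which by \cite[Lemma VII.5.5]{Br-book} holds in any Moufang loop satisfying $[[x,y,z],x]=1$ (automatic here since associators are central). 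So the paper trades your direct associator manipulations for two off-the-shelf structural facts about Moufang inner mappings; this is both quicker and pins down exactly which Moufang-specific ingredients are needed.
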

\begin{proof}
Let $Q$ be a $2$-nilpotent Moufang loop. In view of Corollary \ref{Cr:2nilp}, it suffices to show that the standard associator term is multilinear. Using diassociativity and the fact that all associators are central, we get
\begin{displaymath}
    L_{a,b}(c) = (ab)^{-1}(a(bc)) = (ab)^{-1}((ab)c)[a,b,c]^{-1} = c[a,b,c]^{-1}.
\end{displaymath}
By \cite[Lemma VII.2.2]{Br-book}, the left inner mapping $L_{a,b}$ of any Moufang loop is a so-called right pseudo-automorphism of $Q$ with companion $[b,a]$, i.e.,
\begin{displaymath}
    L_{a,b}(c)(L_{a,b}(d)\cdot[b,a]) = L_{a,b}(cd)\cdot[b,a]
\end{displaymath}
for every $a,b,c,d\in Q$. Since $[b,a]\in Z(Q)$ here, it follows that $L_{a,b}$ is an automorphism of $Q$. Therefore,
\begin{displaymath}
    (cd)[a,b,cd]^{-1} = L_{a,b}(cd) = L_{a,b}(c)\cdot L_{a,b}(d) = c[a,b,c]^{-1}\cdot d[a,b,d]^{-1}
\end{displaymath}
for every $a,b,c,d\in Q$. Since associators are central, we can cancel $c$ and $d$ and deduce linearity in the first slot. By \cite[Lemma VII.5.5]{Br-book}, if a Moufang loop satisfies the identity $[[x,y,z],x]=1$ (which certainly holds here) then it also satisfies the identity $[x,y,z]=[y,z,x]$. Multilinearity now follows from linearity in the first slot.
\end{proof}

\begin{remark}
A \emph{code loop} is a Moufang loop $Q$ possessing a central subloop $Z$ of order 2 such that $Q/Z$ is an elementary abelian 2-group, cf. \cite[Chapter 4]{Asch}, \cite{DV}, \cite{Gr} and \cite{OBV}. Code loops play an important role in the construction of the Monster group \cite{Conway}. Since code loops are Moufang and $2$-nilpotent by definition, they are $3$-supernilpotent by Corollary \ref{Cr:Moufang}. It follows that the standard associator term is multilinear in code loops, a well-known key property of code loops.
\end{remark}

\subsection{Supernilpotence modulo the center and modulo the nucleus}\label{Ss:ZN}

We show that the class of supernilpotence is reduced by at least one modulo the center and by at least two modulo the nucleus, provided that the nucleus is a normal subloop. We also show that the nucleus is normal in $3$-supernilpotent loops.

\begin{lemma}[\cite{SS}]\label{Lm:Q/Z}
Let $Q$ be a $k$-supernilpotent loop and $k\geq 2$. Then $Q/Z(Q)$ is $(k-1)$-supernilpotent.
\end{lemma}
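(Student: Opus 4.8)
The plan is to prove directly that every polynomial on $\bar Q:=Q/Z(Q)$ that is absorbing at $(1,\dots,1)$ into $1$ and has at least $k$ variables is constant; recalling that in loops it suffices to consider absorption at $(1,\dots,1)$ into $1$, this is exactly $(k-1)$-supernilpotence of $\bar Q$. So I would fix such a polynomial $\bar p$, say with $\supp(\bar p)=\{x_1,\dots,x_m\}$ and $m\ge k$, and \emph{lift} it to a polynomial $p$ on $Q$ by replacing each constant $\bar c\in\bar Q$ occurring in $\bar p$ with a fixed representative $c\in Q$. Writing $\pi\colon Q\to\bar Q$ for the canonical projection, $\pi$ is a loop homomorphism with kernel $Z(Q)$; we still have $\supp(p)=\{x_1,\dots,x_m\}$, and, since $\pi$ respects all term operations and $\pi(c)=\bar c$, we get $\pi(p(a_1,\dots,a_m))=\bar p(\pi a_1,\dots,\pi a_m)$ for all $a_i\in Q$. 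As $\bar p$ is absorbing into $\bar 1$, this shows $p(a_1,\dots,a_m)\in Z(Q)$ whenever some $a_i=1$: the lift $p$ is ``absorbing into the center'' of $Q$, although possibly not into $1$.

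The second step upgrades this to genuine absorption by bracketing with fresh variables. Picking variables $y,z\notin\supp(p)$ and writing $p$ for $p(x_1,\dots,x_m)$, consider the four polynomials $[p,y]$, $[p,y,z]$, $[y,p,z]$, $[y,z,p]$ on $Q$, where $[\_,\_]$ and $[\_,\_,\_]$ are the standard commutator and associator terms. Each is absorbing at $(1,\dots,1)$ into $1$: if $y$ or $z$ equals $1$ this follows from \eqref{Eq:CAabsorbing}, while if some $x_i=1$ then $p(a)\in Z(Q)$ commutes with and associates with every element of $Q$, so the bracket evaluates to $1$ by the defining property of commutator and associator terms. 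Moreover $[p,y]$ has $m+1$ variables and the three associator polynomials have $m+2$ variables, all at least $k+1$. Since $Q$ is $k$-supernilpotent, all four polynomials are constant, and, being absorbing, they are identically $1$.

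Unwinding this, for every $a\in Q^m$ the element $p(a)$ commutes with every $u\in Q$ and satisfies $[p(a),u,v]=[u,p(a),v]=[u,v,p(a)]=1$ for all $u,v\in Q$, which by the definitions of the nucleus and the center means precisely $p(a)\in Z(Q)$. Hence $\bar p(\pi a)=\pi(p(a))=\bar 1$ for every $a\in Q^m$, and since $\pi$ is surjective, $\bar p$ is the constant map $\bar 1$, completing the argument. I expect the only delicate point to be bookkeeping: checking the absorption of the four bracketed polynomials coordinate by coordinate, and confirming that their supports are large enough ($\ge k+1$) to invoke $k$-supernilpotence of $Q$ (the hypothesis $k\ge 2$ plays no role beyond making $k-1\ge 1$). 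The one genuinely loop-theoretic ingredient, absent in the group case where a single commutator bracket suffices, is that membership in $Z(Q)$ is certified by commuting \emph{and} associating with everything, which is exactly why the three associator brackets appear alongside the commutator bracket.
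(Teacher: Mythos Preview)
Your argument is correct and follows essentially the same route as the paper's proof: lift the absorbing polynomial to $Q$, bracket it with fresh variables using one commutator and three associator terms, observe that the resulting polynomials are absorbing on $Q$ with more than $k$ variables and hence vanish, and conclude that the original polynomial takes values in $Z(Q)$. Your write-up is more explicit about the lifting of constants and about handling all arities $m\ge k$, but the underlying idea is identical.
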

\begin{proof}
Let $p$ be a $k$-ary polynomial absorbing on $Q/Z(Q)$, i.e., $p(a_1,\dots,a_k)\in Z(Q)$ whenever $a_1,\dots,a_k\in Q$ and $a_i\in Z(Q)$ for some $i$.
Define new polynomials
\begin{align*}
    q(y,x_1,\dots,x_k) &= [y,p(x_1,\dots,x_k)],\\
    r_1(y,z,x_1,\dots,x_k) &= [y,z,p(x_1,\dots,x_k)],\\
    r_2(y,z,x_1,\dots,x_k) &= [y,p(x_1,\dots,x_k),z],\\
    r_3(y,z,x_1,\dots,x_k) &= [p(x_1,\dots,x_k),y,z].
\end{align*}
Since all $q$, $r_1$, $r_2$, $r_3$ are absorbing on $Q$ and of arity bigger than $k$, they are constant (equal to $1$) on $Q$. Therefore $p(a_1,\dots,a_k)\in Z(Q)$ for all $a_1,\dots,a_k\in Q$ and hence
$p$ is constant on $Q/Z(Q)$.
\end{proof}

\begin{lemma}\label{Lm:Q/N}
Let $Q$ be a $k$-supernilpotent loop and $k\geq 3$. Suppose that $\nuc{Q}$ is normal in $Q$. Then $Q/\nuc{Q}$ is $(k-2)$-supernilpotent.
\end{lemma}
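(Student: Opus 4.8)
The plan is to follow the proof of Lemma~\ref{Lm:Q/Z} almost verbatim, with the three associator polynomials built around the center replaced by three associator polynomials built around the nucleus. The key point is that membership in $\nuc{Q}$ is detected by ternary identities alone, so each such identity, with $p$ plugged into one slot, still has two free variables; this is precisely why the class of supernilpotence drops by two here, whereas modulo the center the binary commutator term $[y,p]$ forces a drop of only one.

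First I would observe that, since $\nuc{Q}$ is normal, $Q/\nuc{Q}$ is a loop and every polynomial $p$ on $Q/\nuc{Q}$ can be viewed as a polynomial $\tilde p$ on $Q$ by replacing each of its constants with a fixed representative, so that $\tilde p$ reduces modulo $\nuc{Q}$ to $p$. Now let $p$ be an $n$-ary polynomial with $n\ge k-1$ that is absorbing on $Q/\nuc{Q}$, i.e., $\tilde p(a_1,\dots,a_n)\in\nuc{Q}$ whenever $a_1,\dots,a_n\in Q$ and $a_i\in\nuc{Q}$ for some $i$. Using the standard associator term, define
\begin{align*}
 r_1(y,z,x_1,\dots,x_n) &= [\tilde p(x_1,\dots,x_n),y,z],\\
 r_2(y,z,x_1,\dots,x_n) &= [y,\tilde p(x_1,\dots,x_n),z],\\
 r_3(y,z,x_1,\dots,x_n) &= [y,z,\tilde p(x_1,\dots,x_n)].
\end{align*}
Each $r_i$ has arity $n+2\ge k+1$, and each is absorbing on $Q$: if $y=1$ or $z=1$, the associator has a unit entry and vanishes by \eqref{Eq:CAabsorbing}; if some $x_j=1$, then $\tilde p(x_1,\dots,x_n)\in\nuc{Q}$ by absorption of $p$ on $Q/\nuc{Q}$, so all three associators vanish by the definition of $\nuc{Q}$. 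Hence, by $k$-supernilpotence, each $r_i$ is the constant $1$ on $Q$. Therefore $\tilde p(a_1,\dots,a_n)$ satisfies the three defining associator identities of $\nuc{Q}$ for all $a_1,\dots,a_n\in Q$, i.e., $\tilde p(a_1,\dots,a_n)\in\nuc{Q}$, which says exactly that $p$ is constant on $Q/\nuc{Q}$. Since $p$ was an arbitrary absorbing polynomial of arity $\ge k-1$, the loop $Q/\nuc{Q}$ is $(k-2)$-supernilpotent.

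The argument is essentially routine given Lemma~\ref{Lm:Q/Z}; the only points that need care are the correct lifting of polynomials across the quotient — which is exactly where the normality hypothesis on $\nuc{Q}$ is used — and the bookkeeping confirming that each $r_i$ has arity $\ge k+1$, so that $k$-supernilpotence can be invoked, together with the fact that nucleus membership is governed purely by the three ternary associator conditions.
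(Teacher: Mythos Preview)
Your proof is correct and follows essentially the same approach as the paper's own proof, which simply says ``analogous to the proof of Lemma~\ref{Lm:Q/Z}, omitting the polynomial~$q$.'' You have spelled out the details that the paper leaves implicit, in particular the lifting of polynomials to $Q$ via representatives and the verification that each $r_i$ is absorbing at $(1,\dots,1)$ into $1$.
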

\begin{proof}
The proof is analogous to the proof of Lemma \ref{Lm:Q/Z}, omitting the polynomial~$q$.
\end{proof}

\begin{proposition}\label{Pr:Factors}
Let $Q$ be a 3-supernilpotent loop. Then:
\begin{enumerate}
\item[(i)] $\nuc{Q}$ is a normal subloop of $Q$.
\item[(ii)] $Q/\nuc{Q}$ is an abelian group and $Q/Z(Q)$ is a $2$-nilpotent group.
\end{enumerate}
\end{proposition}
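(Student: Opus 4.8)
The plan is to prove (i) directly from the coset description of normality, and then to obtain (ii) as a formal consequence of Lemmas~\ref{Lm:Q/Z} and~\ref{Lm:Q/N} together with Proposition~\ref{Pr:SupernilpotentLoops12}. Write $N=\nuc Q$. By Theorem~\ref{Th:MainLoops} the loop $Q$ satisfies the identities $\mathcal I_{c,a}$ for the standard commutator and associator; I will use that $N$ is a subloop containing every commutator $[u,v]$ (identity~\eqref{Eq:CN}), that $[x,[x,[x,y]]]=1$ holds on $Q$ (a substitution instance of~\eqref{Eq:CCCtrivial1}), and the basic identity $uv=(vu)[u,v]$. Recall that a subloop $N$ of a loop is normal if and only if $xN=Nx$, $(xN)y=x(Ny)$ and $x(yN)=(xy)N$ for all $x,y$. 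For $N=\nuc Q$ every \emph{association-type} condition is automatic, since each $n\in N$ associates with all elements (so, e.g., $(xn)y=x(ny)$, $x(yn)=(xy)n$, $(nx)y=n(xy)$). Thus the only thing left for (i) is to check $xN=Nx$ for all $x\in Q$.

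For the inclusion $Nx\subseteq xN$ I would argue directly: for $n\in N$, $nx=(xn)[n,x]=x\bigl(n[n,x]\bigr)$ by $uv=(vu)[u,v]$ followed by moving the nucleus element $[n,x]$, and $n[n,x]\in N$. The inclusion $xN\subseteq Nx$ is the heart of the matter. Here I would iterate $uv=(vu)[u,v]$, each time pushing the freshly created (nuclear) commutator to the right:
\[
    xn=(nx)[x,n]=n\bigl(x[x,n]\bigr)=n\Bigl([x,n]\bigl(x[x,[x,n]]\bigr)\Bigr).
\]
Since $[x,[x,[x,n]]]=1$ by~\eqref{Eq:CCCtrivial1}, one more application of $uv=(vu)[u,v]$ shows that the innermost factor $x[x,[x,n]]$ equals $[x,[x,n]]\,x$; reassociating — which is legitimate because $n\in N$ and $n[x,n]\in N$ — then yields
\[
    xn=\bigl((n[x,n])[x,[x,n]]\bigr)x,
\]
and $(n[x,n])[x,[x,n]]\in N$ since $N$ is a subloop. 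This gives $xN\subseteq Nx$, hence $xN=Nx$, so $\nuc Q$ is normal and (i) follows.

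Part (ii) should then follow with no further loop calculus. Since $Q$ is $3$-supernilpotent and $3\ge 2$, Lemma~\ref{Lm:Q/Z} shows $Q/Z(Q)$ is $2$-supernilpotent, hence a $2$-nilpotent group by Proposition~\ref{Pr:SupernilpotentLoops12}; and by part (i) the nucleus is normal, so Lemma~\ref{Lm:Q/N} applies with $k=3$ and gives that $Q/\nuc Q$ is $1$-supernilpotent, hence an abelian group by Proposition~\ref{Pr:SupernilpotentLoops12}. The one real obstacle is the inclusion $xN\subseteq Nx$: a priori the rewriting via $uv=(vu)[u,v]$ generates ever-longer commutators, so it is not obvious the process terminates — it does, precisely because complex commutators of weight $4$ vanish in a $3$-supernilpotent loop, which cuts off the iteration after the third step. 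Everything else in (i), and all of (ii), is routine.
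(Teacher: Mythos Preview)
Your proof is correct. For part~(ii) you argue exactly as the paper does. For part~(i), however, you take a genuinely different route. The paper uses the inner-mapping characterization of normality: it suffices that each generator $L_{a,b}$, $R_{a,b}$, $T_c$ of $\inn Q$ maps $\nuc Q$ into itself, and since $d\in\nuc Q$ is equivalent to $L_{a,b}(d)=R_{a,b}(d)=M_{a,b}(d)=d$ for all $a,b$, only the case of $T_c$ is nontrivial. But Proposition~\ref{Pr:Inn}(ii) says that $L_{a,b}$, $R_{a,b}$, $M_{a,b}$ lie in the center of $\inn Q$, so $L_{a,b}T_c(d)=T_cL_{a,b}(d)=T_c(d)$ and similarly for $R_{a,b}$, $M_{a,b}$; hence $T_c(d)\in\nuc Q$ and normality follows in one line. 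Your approach is more elementary: you verify the coset conditions directly via explicit commutator rewriting, using only \eqref{Eq:CN} and \eqref{Eq:CCCtrivial1} from $\mathcal I_{c,a}$ and bypassing Proposition~\ref{Pr:Inn} entirely. The paper's argument is shorter because it cashes in on structure already established; yours is self-contained and makes visible exactly how the vanishing of weight-$4$ commutators cuts off the iteration $xn\mapsto (nx)[x,n]\mapsto\cdots$ after finitely many steps.
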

\begin{proof}
(i) It suffices to show that the generators $L_{a,b}$, $R_{a,b}$ and $T_c$ of $\inn{Q}$ map $\nuc{Q}$ into $\nuc{Q}$. Recall that $d\in\nuc{Q}$ if and only if $L_{a,b}(d)=R_{a,b}(d)=M_{a,b}(d)=d$ for all $a,b\in Q$. Let $d\in\nuc{Q}$. By Proposition \ref{Pr:Inn}, $L_{a,b}T_c(d)=T_cL_{a,b}(d)=T_c(d)$. Similarly, $R_{a,b}T_c(d) = T_c(d) = M_{a,b}T_c(d)$. Hence $T_c(d)\in\nuc{Q}$.

(ii) By Lemma \ref{Lm:Q/N}, $Q/\nuc{Q}$ is $1$-supernilpotent, so an abelian group by Proposition \ref{Pr:SupernilpotentLoops12}. By Lemma \ref{Lm:Q/Z}, $Q/Z(Q)$ is $2$-supernilpotent, so a $2$-nilpotent group by Proposition  \ref{Pr:SupernilpotentLoops12}.
\end{proof}

\begin{remark}
The nilpotent loop $Q$ in Example \ref{Ex:6} satisfies $\nuc{Q}=Z(Q)\unlhd Q$ and the factor $Q/\nuc{Q} = Q/Z(Q)$ is the cyclic group of order $3$. But $Q$ is not supernilpotent. Hence the conditions of Proposition \ref{Pr:Factors} are not sufficient for supernilpotence, much less for $3$-supernilpotence.
\end{remark}

\subsection{Computational results: 3-supernilpotence in small nilpotent loops}\label{Ss:Comp}

All nilpotent loops of orders $8$ and $9$ can be found in the \texttt{GAP} \cite{GAP} package \texttt{LOOPS} \cite{LOOPS} by calling \texttt{NilpotentLoop(n,m)}.

Semani\v{s}inov\'a implemented an algorithm in \cite{Sema} for deciding whether small nilpotent loops are $3$-supernilpotent. The algorithm, based on the so-called forks developed theoretically by Opr\v{s}al \cite{Op}, is demanding both in time and in memory. It did not finish in quite a few cases. Of the $134$ nilpotent loops of order $8$, Semani\v{s}inov\'a was able to determine the status of $3$-supernilpotence for all but $28$ loops. The algorithm did not finish on any of the $8$ nilpotent loops of order $9$, cf. \cite[Tables 3.1, 3.2]{Sema}.

Our main result, Theorem \ref{Th:MainLoops}, can be turned into an effective test of $3$-supernilpotence for small loops. It takes a few seconds to determine $3$-supernilpotence for all nilpotent loops of order $8$ and $9$. We verified all results of Semani\v{s}inov\'a. In addition, in all of the $28$ previously undetermined cases of nilpotent loops of order $8$, the loops are actually $3$-supernilpotent. For the eight nilpotent loops of order $9$, \texttt{NilpotentLoop(9,m)} is $3$-supernilpotent if and only if $\texttt{m}\in\{4,6,8\}$.

\end{document}